\newcommand{\leqn}{\tagsleft@true}
\newcommand{\reqn}{\tagsleft@false}
\newcommand{\ex}{\mathbb{E}}
\newcommand{\prob}{\mathbb{P}}
\newcommand{\e}{\epsilon}
\newcommand{\ds}{\displaystyle}
\newcommand{\N}{\mathbb{N}}
\newcommand{\R}{\mathbb{R}}
\newcommand{\wt}[1]{\widetilde{#1}}
\newcommand{\lra}{\longrightarrow}
\newcommand{\mb}[1]{\mathbb{#1}}
\newcommand{\mr}[1]{\mathrm{#1}}
\newcommand{\mc}[1]{\mathcal{#1}}
\newcommand{\ld}{\lambda}
\renewcommand{\d}{\delta}
\newcommand{\F}{\mathcal{F}}
\newcommand{\md}{\mathrm{d}}
\newcommand{\dist}{\overset{d}{=}}
\newcommand{\ol}[1]{\overline{#1}}
\renewcommand{\L}{\mb{L}}
\theoremstyle{definition}
\newtheorem{example}{Example}
\newcounter{thm}[section]
\theoremstyle{plain}
\newtheorem{theorem}[thm]{Theorem}
\numberwithin{thm}{section}
\newtheorem{defn}{Definition}
\newtheorem*{conj*}{Conjecture}
\newtheorem{remark}{Remark}
\newtheorem{lemma}[thm]{Lemma}
\newtheorem{prop}[thm]{Proposition}
\newtheorem{cor}[thm]{Corollary}
\newtheorem{P-C}[thm]{Theorem}
\newtheorem{H-L}[thm]{Theorem}
\begin{document}
\title[Diffusions with local time dependent noise]{Propagation of Chaos for reflecting diffusions with local-time dependent noise}
\author{Clayton Barnes}
\address{cbarnes@campus.technion.ac.il, Technion-Israel Institute of Technology}
\thanks{This research was supported by Swiss grant FNS 200021\_175728/1 and the Zuckerman STEM leadership program. The author is currently a Zuckerman Postdoctoral Scholar in the faculty of Industrial Engineering and Management at Technion-Israel Institute of Technology.}





\begin{abstract}
We prove existence and uniqueness of a reaction-diffusion equation whose diffusivity is a non-linear functional of the boundary temperature. We do this by studying systems of one-dimensional reflecting diffusions whose noise
is a function of the  reflection local time of the system, and by characterizing the large-scale (hydrodynamic) behavior by 
showing propagation of chaos. In addition, we analyze the one-particle case by computing the distribution of the hitting times of its reflection local time. 
This work is the noise analog of work done by Frank Knight (2001).
\end{abstract}
\maketitle
\tableofcontents





\section{Introduction}
\subsection{Description of model}
For a given $n$, let $(\Omega, \prob, (\mc{F}_t)_{t \geq 0})$ be a 
probability space supporting $n$ independent $\mc{F}_t-$adapted 
Brownian motions $B_i,$ for $i =1, \dots, n.$ Let $\sigma: [0, \infty) \to (0, \infty)$ be a Lipschitz function. We construct a system of continuous
$\mc{F}_t$-adapted processes $(X_i^{(n)}, L_i^{(n)})$ where the following holds
almost surely, for all $t \in [0, T]$ and $i = 1, \dots, n$:
\begin{align}\label{eq:system_intro}
	\begin{split}
		X_i^{(n)}(t) &= X_i^{(n)}(0) + L_i^{(n)}(t) + \int_0^t\sigma\big( \mb{L}^{(n)}(s)\big)\, \md B_i(s) \geq 0,\\
		\mb{L}^{(n)}(t) &:= \frac{1}{n}\sum_{i = 1}^nL_i^{(n)}(t),\\
		L_i^{(n)}(t) &\text{ is the local time of $X_i^{(n)}$ at zero at time $t.$}
	\end{split}
\end{align}
Informally, $X_1^{(n)}, \dots, X_n^{(n)}$ are one dimensional reflecting diffusions whose diffusivity depends on the collected (reflection) local time. Consequently, the $n$ diffusions are coupled together through the diffusivity function $\sigma.$ If $\sigma$ is constant, then $X_1^{(n)}, \dots, X_n^{(n)}$ are independent, but are dependent otherwise. For instance, if one of the particles has an unusually large reflection local time, this will alter the oscillations of the other particles.

We determine the limiting behavior of the empirical process 
\begin{align}\label{eq:emp_process}
	\pi^{(n)}_t = \sum_{i = 1}^n\d_{\{X_i^{(n)}(t)\}}
\end{align}
as the unique solution to the reaction-diffusion equation \eqref{intro:eq:PDE1}-\eqref{intro:eq:PDE4} by proving propagation of chaos for the system. See Theorem \ref{intro:thm:progagation_chaos}. Often, one appeals to known results in the PDE literature that guarantees uniqueness of limiting PDE. However, this does not seem to be covered in the case of our non-linear free boundary problem. In this article, we show existence and uniqueness of the PDE using a stochastic representation theorem (see Section \ref{subsec:E_U}).

\subsection{Free boundary problem}
\label{intro:PDE}
Let $f(x) \geq 0$ be a probability density supported on $[0, \infty)$. Consider the following non-linear equation for
$(q, \ell)$
\begin{flalign}
	&\frac{\partial q(t, x)}{\partial t} = \frac{1}{2}\sigma^2(\ell(t))\Delta_xq(t, x), \ x > 0\label{intro:eq:PDE1}\\
	&\frac{\partial^+ q(t, 0)}{\partial^+ x} = 0,\label{intro:eq:PDE2}\\
	&\ell(t) = \frac{1}{2}\int_0^tq(s, 0)\sigma^2(\ell(s))\,\md s,\label{intro:eq:PDE3}\\
	&\lim_{t \downarrow 0}q(t, \md x) = f(\md x). \label{intro:eq:PDE4}
\end{flalign}
We think of $q(t, x)$ as the temperature at time-space location $(t, x)$. Here \eqref{intro:eq:PDE2} is the Neumann boundary condition, which is equivalent to the boundary point at zero being a perfect insulator, so that heat is conserved in the system. Because the initial condition $u(0, x) = f(x)$ is a density, $q(t, \cdot)$ defines a probability density for any $t > 0$, and \eqref{intro:eq:PDE4} means the distribution defined by $q(t, \md x)$ converges to the distribution defined by the initial condition $f$ as $t \downarrow 0.$
The relations \eqref{intro:eq:PDE1} and \eqref{intro:eq:PDE3} imply the diffusivity of heat, $\sigma^2(\ell(t))$, is a non-linear functional of the past temperature at the insulator $q(s, 0).$ If such a solution exists, it follows automatically from \eqref{intro:eq:PDE3} that $\ell(t)$ is differentiable.
 Note that the PDE problem above is for the pair $(q, \ell)$ and is consequently non-linear because of the interaction between the diffusivity and the insulator's temperature:

\[
\text{temperature at insulator } \overset{\text{Equation \eqref{intro:eq:PDE3}}}{\longleftrightarrow} \text{ diffusivity of heat}.
\]

In proving well-posedness for the above free boundary problem we arrive at a stochastic represenation for both $q(t, x)$ and $\ell$ and give an example where this can be used to solve for $\ell$ explicitly.

\subsection{Main results}

\begin{theorem}
	There exists a unique classical solution to the free boundary problem in subsection \ref{intro:PDE}.
\end{theorem}

\begin{theorem}[Strong existence and uniqueness of particle system]\label{intro:th:system_existence}
	Let $\sigma:[0, \infty) \to (0, \infty)$ be Lipschitz, $n \in \N$, and $(\Omega, \prob, (\mc{F}_t)_{t \geq 0})$ a probability space supporting $n$ i.i.d.\
	$\mc{F}_t-$Brownian motions. There continuous $\mc{F}_t-$adapted processes $(X_1^{(n)}, \dots, X_n^{(n)}, L_1^{(n)}$ $,\dots, L_n^{(n)})$ satisfying \eqref{eq:system_intro} for $t \in [0, T]$. Such a solution is pathwise unique.
\end{theorem}

Let $\mc{P}(\R)$ be the space of probability measures on $\R$ equipped with the weak topology. Notice that the system $(X_1^{(n)}, \dots, X_n^{(n)})$ is exchangeable in the sense that the law does not depend on the labeling. For such systems, propagation of chaos is equivalent to convergence in 
law of the empirical process $\pi^{(n)}$ when it is viewed as a $\mc{P}(\R)-$valued process. 
The distributional limit of $\pi^{(n)}$ is the deterministic $\mc{P}(\R)-$valued process concentrated on the transition density of
$\wt{X}_1$ constructed in Theorem \ref{progagation_chaos}.
For fixed $t,$ $\pi^{(n)}_t = \frac{1}{n}\sum_{i = 1}^n\d_{X_i^{(n)}(t)}$ is a (random) probability measure on $\R.$ Because each $X_i^{(n)}$ is continuous, $\{\pi^{(n)}_t : t \in [0, T]\}$ is a continuous $\mc{P}(\R)-$valued stochastic process. Therefore, $\pi^{(n)}$ induces a distribution on $C([0, T], \mc{P}(\R))$ with the metric of weak-convergence placed on $\mc{P}(\R).$ The hydrodynamic limit (Theorem \ref{HL:B}) characterizes the distributional limit as $n$ approaches infinity. See Section 4.1 for more details, and M\'eleard \cite{Meleard}.
\begin{theorem}[Propagation of Chaos]\label{intro:thm:progagation_chaos}\label{progagation_chaos}
	Assume $X_i^{(n)}(0) \overset{W_1}{\lra} X_i(0)$, where $W_1$ is the Wasserstein metric and $\left(X_i(0): i \in \N\right)$ are i.i.d.\ samples of a 
	non-negative random variable independent of the Brownian motions $(B_i : i \in \N)$. Then for any 
	$k,$ $(X_1^{(n)}, \dots, X_k^{(n)})$ converges in distribution to a $k$-tuple of independent processes $(\wt{X}_1, \dots, \wt{X}_k)$, with
\begin{align}\label{th:POC:X_tilde}
	\wt{X}_i(t) = X_i(0) + \wt{L}_i(t) + \int_0^t\sigma(\ell(s))\,\mr{d}B_i(s),
\end{align}
	for $i = 1, \dots, k,$
	where $\wt{L}_i$ is the local time of $\wt{X}_i(t)$ at zero, and
\begin{align}\label{th:POC:ell}
	\ell(t) = \ex \wt{L}_1(t).
\end{align}
\end{theorem}

\begin{theorem}[Hydrodynamic Limit]\label{HL:B}
	Let $q(t, x)$ be the density of $\wt{X}_1$ given in Theorem \ref{intro:thm:progagation_chaos}. The empirical process $\pi^{(n)}$ converges 
	weakly to $\{q(t, x)\md x : t \in [0, T]\}$ in the space $C([0, T], \mc{P}(\R))$.
\end{theorem}

\subsection{Results for one particle}
In this subsection we describe the basic results for the hitting time distribution of $L$ in the $n=1$ particle case.
\begin{theorem}[Existence and uniqueness]\label{intro:th:sde_sigma_p}
	Let $\sigma : [0, \infty) \to [0, \infty)$ have support $[0, \sigma_0]$ (possibly $\sigma_0 = \infty$). Assume that $\sigma$ is locally Lipschitz on $[0, \sigma_0)$. Then there exists a unique strong solution to 
	\begin{align*}
		Y(t) &= Y(0) + L(t) + \int_0^t\sigma(L(s))\, \mr{d}B(s) \geq 0,\\
		L &\text{ is the reflection local time of $Y$ at zero,}
	\end{align*}
	for all $t \in [0, \tau_{\sigma_0}).$ Here, for any $a \in [0, \sigma_0]$
	\[
	\tau_a = \inf\{t > 0: L(t) \geq a\}
	\] is the hitting time of $L$ at level $a.$ Assuming the initial condition $Y(0)$ is independent of the Brownian motion $B$, the distribution of $\tau_a$ has Laplace transform given by
	\[
	\phi(\ld) = \ex\left[\exp\{-\sqrt{2\ld}\,Y(0)\}\right]\cdot \exp\left\{ -\sqrt{2\ld}\int_0^a\frac{\md s}{\sigma(s)}\right\}.
	\]
	Furthermore,
	\begin{align}\label{eq1:th:sde_sigma_p}
		\tau_{\sigma_0} &< \infty \text{ almost surely, if $\int_0^{\sigma_0} \frac{\md s}{\sigma(s)} < \infty$,}\\
		\tau_{\sigma_0} &= \infty \text{ almost surely, if $\int_0^{\sigma_0} \frac{\md s}{\sigma(s)} = \infty$.}
	\end{align}
	In particular, when $Y(0) = 0$, the distribution of $\tau_{\sigma_0}$ is determined by its Laplace transform
	\begin{align}\label{eq:laplace_of_tau_p}
		\phi(\ld) := \ex(e^{-\ld \tau_{\sigma_0}}) = \exp\left(-\sqrt{2\ld}\int_0^{\sigma_0}\frac{\mr{d}s}{\sigma(s)}\right).
	\end{align}
\end{theorem}

\begin{remark}\label{intro:remark}
	Strong existence and uniqueness of processes similar to \eqref{eq1:intro} reflecting in an orthant follow
	from Dupuis and Ishii's work \cite{10.2307/2244772}. To see this, consider the 
	higher dimensional process $X = (X_1, X_2)$ reflecting in $G = \R_{\geq 0} \times \R$ (in the direction (1, 1)). Let $\sigma = (\sigma_{ij})$ be degenerate in the bottom row ($\sigma_{21} = 0 = \sigma_{22}$). In this way the second component $X_2$ becomes the local time of $X_1$ and 
	one can write $X_1$ as a reflected diffusion depending on its local time $X_2$. We prove 
	Theorem \ref{intro:th:sde_sigma_p} from an approximation method which, 
	in addition, gives us tools for studying the hitting times of $L$.
\end{remark}
\begin{example}[$\sigma(x) = (1- x)^p\mathds{1}(0 \leq x \leq)$]
	In this example, the noise decreases as a power $p \in [0, \infty)$ of the local time's proximity to one. If $0 \leq p < 1$ then $\tau_1 < \infty$ almost surely and the noise will almost surely disappear, while $\tau_1 = \infty$ almost surely if $p \geq 1$.
\end{example}

\subsection{Outline} In Section \ref{sec:integ_constr} we construct a map $f \mapsto (x_f, L)$ via an approximation method for  the one-particle case ($n = 1$) of \eqref{eq:system_intro}.  That is, we construct the following integral system for any $f \in C[0, T]$, when $\sigma$ is a non-negative Lipschitz function bounded away from zero such that 
\begin{align}
	\begin{split}\label{intro:integral_equation} 
		&x_f(t) + L(t) \geq 0 \text{ for all $t \in [0, T]$},\\
		&x_f(t) = x_f(s) + \sigma(L(s))(t -s), \text{if $L$ is flat on $[s, t]$},
	\end{split}
\end{align}
where $t \mapsto L(t)$ is a nondecreasing continuous function that is flat off the set $\{s :  x_f(s) + L(s) = 0\}$. 

We construct the pair $(Y, L)$ by replacing $f$ in \eqref{intro:integral_equation} pathwise with Brownian motion. In Section \ref{sec:existence_of_sde} we prove Theorem \ref{intro:th:sde_sigma_p} by applying results obtained in Section \ref{sec:integ_constr}
that allow us to characterize the hitting times of $L.$

In Section \ref{sec:systems} we prove the propagation of chaos, Theorem \ref{intro:thm:progagation_chaos}. In general, processes that interact through their local times are hard to study because the correlation structure between the particles is difficult to analyze. In our approach we study the
large scale behavior of hitting times for the local time of the system $\mathbb{L}^{(n)}$, showing that it becomes deterministic in the limit. See Propostition \ref{prop:deter_limit}. See \cite{nadtochiy2017particle}, where the authors study systems of one-dimensional particles interacting through hitting times.

We conclude with Section \ref{subsec:E_U}, where we demonstrate existence and uniqueness for the free-boundary problem described in subsection \ref{intro:PDE}. Existence follows from a stochastic representation. That is, the solution of the PDE is the transition density of the limiting process for a fixed particle of the system, as described in the statement of propagation of chaos. Uniqueness of the PDE is demonstrated by first showing uniqueness of the diffusivity function $\ell(t)$, which is done by a coupling argument. 

\subsection{Background}
Historically, the study of macroscopic behavior  for systems of randomly interacting particles began in 1956 by Kac \cite{Kac} and continued with McKean \cite{McKean}
in 1969. This was followed by fundamental contributions during the 1980's by 
Sznitman \cite{Sznitman2,Sznitman1}, Tanaka \cite{Tanaka}, G\"artner \cite{Gartner} and many others. The hydrodynamic limit of a system of interacting particles is sometimes referred to as the macroscopic behavior of the system or the asymptotic behavior of the empirical measures, such
a result is closely related to propagation of chaos, and the two are equivalent when 
the system of interacting particles satisfies an exchangability condition \cite{Sznitman2, Sznitman1, Meleard}. For a history of hydrodynamic limits see \cite{golse2005hydrodynamic} and \cite{chen2017systems}. Systems of randomly interacting particles are probabilistic models originally motivated by 
statistical mechanics and statistical thermodynamics, particularly the theory explored 
by Maxwell, Boltzmann, and Vlasov who describe the deterministic evolution of the distribution of 
gas. A good review of interacting particle systems of the McKean-Vlasov type is found in M\'el\'eard \cite{Meleard}.

Fan \cite{fan2016discrete} shows that a
reflection term for a random walk reflecting inside a discretization of a sufficiently smooth domain will approximate the local time of reflected Brownian
 motion. We mention that our results also give a discrete approximation scheme for solutions to \eqref{eq1:intro}. 

There is a large variety of interacting particle systems giving rise to many different limiting 
behaviors. See the above mentioned works of Tanaka \cite{Tanaka}, Sznitman \cite{Sznitman1,Sznitman2}, as well as Skorohod \cite{skorohod1987stochastic}, Nadtochiy and Shkolnikov \cite{nadtochiy2017particle}, Chen and Fan \cite{chen2017systems}, Coghi et al \cite{coghi2021mckean} to mention some. For models of interacting particles with rank dependence, see Sarantsev 
\cite{sarantsev2015triple,sarantsev2017infinite}, Karatzas, Pal and Shkolnikov \cite{karatzas2016systems}, and Cabezas et al.\ who study out-of-equilibrium behavior of particles 
interacting through their ranks \cite{cabezas2019brownian}.

We briefly bring attention to the relatively recent study of stochastic free boundary problems. 
These are essentially SPDE's with a free boundary. See \cite{kim2012stochastic}.

In \cite{barnes2017hydrodynamic}, the author demonstrated hydrodynamic behavior using Lipschitz properties of Skorohod maps. We also employ Skorohod maps for constructing the solutions to \eqref{eq:system_intro}. However, the construction of the Skorohod maps, and their use in demonstrating hydrodynamic behavior is completely different than the methods we use here. We use a
stochastic representation (see Remark \ref{rem:change_time_ell_int_rep}) to prove existence and uniqueness of the free boundary problem without relying on existence and uniqueness theorems from
the theory of PDEs. This is the first existence and uniqueness result for the free boundary problem we study, as it seems not to be subsumed by known results in the analysis literature; see \cite{fasano1977generalI,fasano1977generalII,fasano1977generalIII}, for 
existence and uniqueness for generalizations of the Stephan problem.


Because hitting times for the local time process is employed in our proof of the propagation of chaos, we include a study of the $n = 1$ case for  \eqref{eq:system_intro}, which is simply a one-dimensional reflected diffusions whose noise term depends on the reflection local time.
For any filtered probability space $(\Omega, \prob, (\mc{F}_t)_{t \geq 0})$ satisfying the 
usual conditions and supporting an $\mc{F}_t-$adapted Brownian motion $B$, consider 
an $\mc{F}_t$-adapted continuous pair $(Y, L)$ such that almost surely:
\begin{align}\label{eq1:intro}
Y(t) = Y(0) + \int_0^t\sigma(L(s))\, \mr{d}B(s) + L(t) \geq 0, \text{ for all $t \in [0, T]$,}
\end{align}
where $L$ is the local time of $Y$ at zero and where $\sigma$ is non-negative and locally Lipschitz on its support. We show strong existence of such a process and characterize the hitting time distribution for $L.$ The process $Y$ has a generalized It\^o-Tanaka rule which we describe. Notice that $Y$ is not a Markov process because its noise depends on its path history. However, $(Y, L)$ is a Markov process.

Intuitively, $Y$ is a
process reflected inside $[0, \infty)$ whose noise changes upon every contact with the boundary at zero. 
For example, if $\sigma$ is a strictly decreasing function with support $[0, \sigma_0]$ (possibly with $\sigma_0 = \infty$) the noise of $Y$ will lose ``power'' upon reflecting.
Because $\sigma$ decreases to zero, it is conceivable that the noise will completely disappear at some random time
\begin{align}\label{hitting_time: tau}
\tau_{\sigma_0} = \inf\{s : \sigma(L(s)) = 0\} = \inf\{s : L(s) \geq \sigma_0\}.
\end{align}
 Whenever $\tau_{\sigma_0}$ is finite the process $Y$ will have lost all noise at time $\tau_{\sigma_0}$. That 
 is, we can continuously extend the noise $\sigma \circ L$ to be zero from time $\tau_{\sigma_0}$ onward. Hence we call $\tau_{\sigma_0}$ the \emph{time of determinacy}. We characterize $\tau_{\sigma_0}$ and show it is either a.s.\ finite or a.s.\ infinite
by characterizing the distribution of the hitting times of $L$ in general. See Theorem \ref{intro:th:sde_sigma_p}.

\subsection{Definitions}
We list the following definitions that will be used throughout the paper.
\begin{defn}
	We define the space $C[0, T]$ as the set of continuous functions $f : [0, T] \to \R$ with the metric defined by the uniform norm.
\end{defn}

\begin{defn}
	For a function $f:[0, T] \to \R$ and a set $A \subset [0, T]$, define
	\[
	\|f\|_{A} = \sup_{x \in A}|f(x)|.
	\] Similarly, we let
	\[
		\omega(f, \delta, T) := \sup_{\substack{|t - s| < \d \\ s, t \in [0, T]}}|f(t) - f(s)|
	\]
be the modulus of continuity of $f$ on $[0, T]$ depending on $\d.$
\end{defn}
\begin{defn}
	We define $(\mathcal{P}(\R), W_1)$ as the space of probability measures on $\R$ together with the Wasserstein-1 metric. In general, $\mu_n \to \mu$ under
	$W_1$ if there exists a probability space supporting random variables $X$ and $X_n, n \in \N$ such that $X \dist \mu, X_n \dist \mu_n$ for all $n \in \N,$ and $X_n \to X$ almost surely and in $L_1.$ See \cite{villani2003topics}.
\end{defn}
\begin{defn}\label{defn:tau_f}
	For $g \in C[0, T]$, define 
	\[
	M^g(t) = \min_{s \in [0, t]}(-g(s)) \lor 0
	\]
	as the signed running minimum of $g$ below zero. We define 
	\[
	\tau_g(x) = \inf\{t > 0 : M^g(t) > x\} \land S,
	\] where $S = \sup\{t : g(t) < g(T)\}$, as a right continuous inverse of $M^g$, and $\inf \emptyset = \infty.$ It is the unique right continuous inverse on its restricted domain $[0, g(T)).$
\end{defn}
\begin{defn}
Let $0 = t_0 < t_1 < \cdots < t_k = T$ be a partition of the interval 
$[0, T]$. Given another partition $0 = x_0 < x_1 < \cdots < x_m = T$, we define
the common refinement of $Q = \{t_i : i = 0, \dots, k\}$ and $P = \{x_i : i = 0, \dots, m\}$ to be the partition of $[0, T]$ formed from their union. A partition $Q$ is called finer than another partition $Q'$ if $Q' \subset Q.$
\end{defn}

\section{Construction of Skorohod map}\label{sec:integ_constr}
For a continuous function $f$ and positive Lipschitz function $\sigma$ we give a well-defined meaning to the following system:
\begin{align}
\begin{split}\label{eq:integ_eq}
&x_f(t) := f(0) + \int_0^t\sigma(L(s))\, \mr{d}f(s),\\ 
&x_f(t) + L(t) \geq 0 \text{ for all $t \in [0, T]$},
\end{split}
\end{align}
where $t \mapsto L(t)$ is a nondecreasing continuous function that is flat off the set $\{s :  = x_f(s) + L(s) = 0\}$. Because $L$ is nondecreasing and continuous it defines a measure on $[0, T].$ We show the existence and uniqueness of $(x_f, L)$ with the property that for an interval $[t^*, t)$ on which $L$ does not increase (i.e. the measure induced by $L$ gives zero measure to $[t^*, t)$), we have
\[
x_f(s) = x_f(t^*) + \sigma(L(t^*))(f(s) - f(t^*))
\]
for any $s \in [t^*, t).$ We use the integral notation in \eqref{eq:integ_eq} because the increments of $x_f$ are the increments of
$f$ scaled by $\sigma(L(t^*))$ in such an interval. Furthermore, when $f$ is replaced pathwise by a Brownian motion, the resulting process $(x_B + L, L)$ will be a strong solution to the pair $(Y, L)$ described in Theorem \ref{intro:th:sde_sigma_p}. We show this in Section \ref{sec:existence_of_sde}.


We construct $(x_f, L)$ as a limit of a sequence of approximations $(x_f^{(n)}, L^{(n)})$. For a given $n \in \N$, we define the pair $(x_f^{(n)}, L^{(n)})$ by inducting over times segments $[t_i^{(n)}, t_{i + 1}^{(n)})$, via

\reqn
\begin{align}
\begin{split}\label{def_x_f_n}
&t_0^{(n)} = 0,\\ 
&t_{i + 1}^{(n)} = \inf\left\{t > t_i^{(n)} : x_f^{(n)}(t_i^{(n)}) + \sigma\Big(\frac{i}{n}\Big)(f(t) - f(t_i^{(n)})) < -\frac{i+1}{n}\right\},\\
&x_f^{(n)}(0) = f(0), \\
&x_f^{(n)}(t) = x_f^{(n)}(t_i^{(n)}) + \sigma\Big(\frac{i}{n}\Big)(f(t) - f(t_i^{(n)})) \text{ for $t \in [t_i^{(n)}, t_{i + 1}^{(n)}]$},
L^{(n)}(t) := M^{x_f}(t).
\end{split}
\end{align}
\leqn
We use the convention that $\inf \emptyset = \infty$. We conceal $\sigma$ for convenience in the notation of $(x_f^{(n)}, L^{(n)})$ but may specifically mention $\sigma$ if particular clarification is needed.

\begin{lemma}\label{lemma:lemma1}\text{\\}
\begin{enumerate}[label = (\roman*)]
\item \[
x_f(0) = f(0) \geq 0, \ x_f(t_i^{(n)}) = -i/n \text{ for $i \in \N$},\]
\item \[
\ds f(t_{i+1}^{(n)}) - f(t_i^{(n)}) = -\frac{1}{n\sigma(i/n)}\text{ when $t_{i + 1}^{(n)} < \infty$},\]
\item \[\ds
t_i^{(n)} = \inf\Big\{t > 0: M^f(t) > \sum_{j = 0}^i\frac{1}{n\sigma(j/n)}\Big\},\]
\item For each $n,$ \[
\sup_{i \in \N}t_i^{(n)} = \infty,\]
\item \[
\ds x_f^{(n)}(t) = \sum_{i = 0}^\infty \sigma(i/n)(f(t \land t_{i + 1}^{(n)}) - f(t \land t_i^{(n)})),
\]
\item \[
\ds
\frac{\lfloor nL^{(n)}(t)\rfloor}{n} = \frac{i}{n}
\text{ for $t \in [t_i^{(n)}, t_{i + 1}^{(n)}),$}\]
\item \[
\ds x_f^{(n)}(t) - x_f^{(n)}(0) = \sum_{i = 0}^\infty\sigma\left(\frac{\lfloor nL^{(n)}\big(t_i^{(n)}\big) \rfloor}{n}\right)\big(f(t \land t_{i + 1}^{(n)}) - f(t \land t_i^{(n)})\big),\]
\item 
\[
x_f^{(n)}(t) = x_f^{(n)}(s) + \sigma(i/n)(f(t) - f(s)) \text{ for $t_i^{(n)} \leq s \leq t \leq t_{i + 1}^{(n)}$},
\]
\item For $s < t$, and $k$ with $s \in [t_k^{(n)}, t_{k + 1}^{(n)})$,
\[
x_f^{(n)}(t) = x_f^{(n)}(s) + x_g^{(n)}(t - s)
\]
where $g(x) = x_f^{(n)}(s) + L^{(n)}(s) + f(s + x) - f(s)$ and $x_g^{(n)}$ is given by \eqref{def_x_f_n} using $\sigma_g(x) = \sigma\Big(\frac{k}{n} + x\Big),$
\item $(x_f^{(n)}(t), L^{(n)}(t))$ depends on $f|_{[0, t]}.$ For $s < t$, $(x_f^{(n)}(t), L^{(n)}(t))$ is a function of $(x_f^{(n)}(s), L^{(n)}(s))$ and $f|_{[s, t]}.$
\end{enumerate}
\end{lemma}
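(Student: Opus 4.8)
My plan is to prove the lemma by systematically unwinding the recursive definition \eqref{def_x_f_n}, establishing the value identities first, then the running-minimum identities, and finally the flow property. I would begin with (i), by induction on $i$. The base case is the definition $x_f^{(n)}(0)=f(0)$. For the inductive step, continuity of $f$ forces the first-passage value to equal the target level: since $t_{i+1}^{(n)}$ is the first time the continuous map $t\mapsto x_f^{(n)}(t_i^{(n)})+\sigma(i/n)(f(t)-f(t_i^{(n)}))$ drops below $-(i+1)/n$, its value there is exactly $-(i+1)/n$, which is $x_f^{(n)}(t_{i+1}^{(n)})$. Identity (ii) is then immediate by subtracting consecutive instances of (i) and dividing by $\sigma(i/n)>0$, while (viii) and (v) are pure telescoping of the piecewise definition over the completed segments together with the current one. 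Property (vii) is just (v) rewritten using (vi), since $t_i^{(n)}\in[t_i^{(n)},t_{i+1}^{(n)})$ gives $\lfloor nL^{(n)}(t_i^{(n)})\rfloor/n=i/n$.

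For the running-minimum identities I would exploit the Skorohod-type duality between $L^{(n)}=M^{x_f^{(n)}}$ and first passage. Telescoping (ii) gives $f(t_i^{(n)})=f(0)-\sum_{j<i}\tfrac{1}{n\sigma(j/n)}$, and the recursion shows each $t_{i+1}^{(n)}$ is the first passage of $f$, after $t_i^{(n)}$, to the level $\tfrac{1}{n\sigma(i/n)}$ below $f(t_i^{(n)})$. Hence the $t_i^{(n)}$ are successive record-low times of $f$, so $-f(t_i^{(n)})$ is the running minimum $M^f$ at $t_i^{(n)}$; identity (iii) then expresses $t_i^{(n)}$ as the right-continuous inverse $\tau_f$ of $M^f$ evaluated at the relevant partial sum of increments. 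Here the main care is bookkeeping: matching the index and the strict-versus-weak inequality built into $\tau_f$, and correctly carrying the initial value $f(0)$ through the base case $i=0$. For (vi), since $x_f^{(n)}$ attains $-i/n$ at $t_i^{(n)}$ and, by the defining first passage, does not fall below $-(i+1)/n$ on $[t_i^{(n)},t_{i+1}^{(n)})$, the running minimum of $x_f^{(n)}$ over $[0,t]$ lies in $[-(i+1)/n,-i/n]$ there, whence $L^{(n)}(t)\in[i/n,(i+1)/n)$ and $\lfloor nL^{(n)}(t)\rfloor=i$.

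Property (iv) is the one place where the Lipschitz hypothesis does real work, and I expect it to be the main obstacle. The claim amounts to ruling out accumulation of the breakpoints at a finite time. If all $t_i^{(n)}$ were finite and bounded, say $t_i^{(n)}\to t^\ast<\infty$, then by continuity $f(t_i^{(n)})\to f(t^\ast)$ is finite; but the Lipschitz bound $\sigma(x)\le\sigma(0)+Kx$ gives $\sum_{j<i}\tfrac{1}{n\sigma(j/n)}\ge\sum_{j<i}\tfrac{1}{n\sigma(0)+Kj}$, which diverges like $K^{-1}\log i$, so $f(t_i^{(n)})\to-\infty$, a contradiction. Thus either some $t_{i+1}^{(n)}=\infty$ (an empty infimum) or $t_i^{(n)}\to\infty$, and in both cases $\sup_i t_i^{(n)}=\infty$. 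The subtlety worth stressing is that $\sigma$ is a priori unbounded above, so one genuinely needs the at-most-linear growth that the Lipschitz property supplies, not merely the lower bound $\sigma\ge c>0$, in order to force the decrements to sum to infinity.

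Finally, I would handle the flow property (ix) and the measurability statement (x) together. For (ix) I would run the construction \eqref{def_x_f_n} on the shifted data $g$ with the shifted coefficient $\sigma_g(\cdot)=\sigma(k/n+\cdot)$ and show, by induction, that its breakpoints are exactly $t_{k+1}^{(n)}-s,\,t_{k+2}^{(n)}-s,\dots$ and that its piecewise formula reproduces the increments of $x_f^{(n)}$ on $[s,t]$; this uses (viii) on the partial segment containing $s$ and then the segment-by-segment recursion, with the shift by $L^{(n)}(s)$ encoding the accumulated local time so that the active coefficient index starts at $k$. The first half of (x)—that $(x_f^{(n)}(t),L^{(n)}(t))$ is a functional of $f|_{[0,t]}$—is immediate, since every breakpoint up to $t$ and every value is defined from $f$ on $[0,t]$; the second half is precisely the content of (ix), because the shifted initial data and the index $k$ are determined by the state $(x_f^{(n)}(s),L^{(n)}(s))$. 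The alignment of the two families of breakpoints and of the shifted $\sigma$ in (ix) is the most notation-heavy part, but it is conceptually routine once (i), (ii), and (viii) are in hand.
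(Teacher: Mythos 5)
The paper offers no argument for this lemma at all --- it simply asserts that the properties ``follow directly from the definitions'' --- so your write-up supplies strictly more than the source does, and most of it is sound. The first-passage/continuity argument for (i), the telescoping for (ii), (v), (vii), (viii), the identification of the $t_i^{(n)}$ as successive record-low times for (iii), the confinement of the running minimum to $[i/n,(i+1)/n)$ for (vi), and the harmonic-series divergence for (iv) are all correct; you are also right that (iv) genuinely needs the at-most-linear growth from the Lipschitz hypothesis rather than the lower bound $\sigma\ge\delta$, and that (ii)--(iii) only come out exactly as printed after the $i=0$ term is normalized (as stated they implicitly take $f(0)=0$; otherwise the first increment is $(-1/n-f(0))/\sigma(0)$, not $-1/(n\sigma(0))$).

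The one step that would fail is your proof of (ix). You claim the breakpoints of $x_g^{(n)}$ are exactly $t_{k+1}^{(n)}-s,\,t_{k+2}^{(n)}-s,\dots$, but the first breakpoint of $x_g^{(n)}$ is $\inf\{u: x_f^{(n)}(s)+L^{(n)}(s)+\sigma(k/n)(f(s+u)-f(s))<-1/n\}$, which coincides with $t_{k+1}^{(n)}-s=\inf\{u: x_f^{(n)}(s)+\sigma(k/n)(f(s+u)-f(s))<-(k+1)/n\}$ only when $L^{(n)}(s)=k/n$; by (vi) one only knows $L^{(n)}(s)\in[k/n,(k+1)/n)$. The additive form of the identity also fails at $u=0$ unless $x_f^{(n)}(s)+L^{(n)}(s)=0$: taking $\sigma\equiv 1$ gives $x_f^{(n)}=f$ and $x_g^{(n)}(u)=M^f(s)+f(s+u)$, so the asserted identity reads $f(t)=f(s)+M^f(s)+f(t)$. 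Thus (ix) as printed is exact only at times $s$ with $x_f^{(n)}(s)=-L^{(n)}(s)=-k/n$ (in particular at $s=t_k^{(n)}$), where your induction does go through; for general $s$ both the breakpoints and the values are off. This is a defect of the statement rather than of your strategy --- the paper only invokes (ix) at such stopping levels in the proof of Theorem \ref{theorem:convergence}, and the residual $O(1/n)$ mismatch is absorbed there by Proposition \ref{prop:initial_conditions} after passing to the limit --- but your proof should either restrict $s$ accordingly or prove a corrected identity of the form $x_f^{(n)}(s+u)=x_g^{(n)}(u)-L^{(n)}(s)$ with an explicit $O(1/n)$ error. Part (x) is unaffected, since it follows from the recursion directly without routing through (ix).
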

\begin{proof}
	(i)-(iii), (viii), (ix), and (x) follow from the definition, (iv) follows from the Lipschitz property of $\sigma,$ (v)-(vii) follow by an induction argument. 
\end{proof}

We will use the following results to show existence of subsequential limits of $(x_f^{(n)}, L^{(n)})$ in $C[0, \tau_f(1/K - \e)]$ for $\e \in (0, 1/K)$. Recall $\tau_f$ is given in Definition \ref{defn:tau_f}.
\begin{lemma}\label{lemma2}
Let $K$ be the Lipschitz constant for $\sigma.$ For any fixed $\e > 0,$ there is a $C(\e) \in \N$ such that 
\[
\sum_{i = 0}^{C(\e)n}\frac{1}{n\sigma(i/n)} > \frac{1}{K} - \e,
\]
for all $n$.
\end{lemma}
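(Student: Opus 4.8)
The plan is to bound the sum below by an $n$-independent quantity using only the $K$-Lipschitz property of $\sigma$ together with $\sigma(0) > 0$, and then to observe that this quantity can be made to exceed $\frac{1}{K} - \e$ by choosing $C(\e)$ large enough.

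First I would record the elementary consequence of $K$-Lipschitzness that $\sigma(i/n) \leq \sigma(0) + Ki/n$ for every $i$ and $n$, so that each summand satisfies
\[
\frac{1}{n\sigma(i/n)} \geq \frac{1}{n\sigma(0) + Ki}.
\]
The right-hand side is the value at $s = i/n$ of the decreasing function $s \mapsto \frac{1}{n(\sigma(0)+Ks)}$, so a Riemann-sum/integral comparison (each term dominates the integral over the next mesh interval) gives, for any integer $C$,
\[
\sum_{i=0}^{Cn}\frac{1}{n\sigma(i/n)} \geq \frac{1}{n}\sum_{i=0}^{Cn}\frac{1}{\sigma(0)+Ki/n} \geq \int_0^{C + 1/n}\frac{\md s}{\sigma(0)+Ks} = \frac{1}{K}\ln\left(1 + \frac{K(C + 1/n)}{\sigma(0)}\right).
\]
Since $C + 1/n \geq C$, the last expression is at least $\frac{1}{K}\ln\!\left(1 + \frac{KC}{\sigma(0)}\right)$, a bound that no longer depends on $n$. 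As this tends to $+\infty$ with $C$, I can pick $C(\e) \in \N$ with $\frac{1}{K}\ln\!\left(1 + KC(\e)/\sigma(0)\right) > \frac{1}{K} - \e$ (explicitly, any $C(\e) > \frac{\sigma(0)}{K}\left(e^{1 - K\e} - 1\right)$ works), which yields the claim simultaneously for all $n$.

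The only real subtlety is the uniformity in $n$: the integral comparison must be arranged so that the resulting lower bound is bounded below independently of $n$. This is exactly why I compare against $\int_0^{C+1/n}$ rather than $\int_0^{C}$ and then discard the harmless $+1/n$, keeping the bound $n$-free. A fully discrete alternative avoids integrals entirely: from $\ln(1+x) \leq x$ one gets $\frac{1}{n\sigma(0)+Ki} \geq \frac{1}{K}\bigl[\ln(n\sigma(0)+K(i+1)) - \ln(n\sigma(0)+Ki)\bigr]$, and telescoping over $0 \leq i \leq Cn$ reproduces the same lower bound $\frac{1}{K}\ln\!\left(1 + \frac{K(C + 1/n)}{\sigma(0)}\right)$. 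I do not expect any genuine obstacle beyond this bookkeeping; the constant $\frac{1}{K}$ in the statement is merely a convenient finite target, since the sum can in fact be driven above any fixed level uniformly in $n$.
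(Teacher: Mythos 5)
Your proof is correct, but it takes a genuinely different route from the paper's. After the common first step $\sigma(i/n)\le\sigma(0)+Ki/n$, the paper simply bounds every summand from below by the smallest one, $\frac{1}{n\sigma(0)+KCn}$, obtaining
\[
\sum_{i=0}^{Cn}\frac{1}{n\sigma(i/n)} \;\ge\; \frac{C}{KC+\sigma(0)},
\]
which increases to exactly $1/K$ as $C\to\infty$ --- just barely enough to clear the target $1/K-\e$. You instead compare the sum with $\int_0^{C+1/n}\frac{\md s}{\sigma(0)+Ks}$ (or equivalently telescope via $\ln(1+x)\le x$), obtaining the lower bound $\frac{1}{K}\ln\bigl(1+KC/\sigma(0)\bigr)$, which is uniform in $n$ and diverges as $C\to\infty$. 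Your bound is strictly stronger: it shows the sums can be pushed above \emph{any} prescribed level uniformly in $n$, not merely above $1/K-\e$, and it correctly identifies that the threshold $1/K$ in the statement is an artifact of the later contraction argument (where $KM^f(t)<1$ is needed) rather than a limitation of the sum itself. The paper's argument buys maximal elementarity --- no integrals or logarithms, just replacing each term by the minimum --- at the cost of a bound that saturates precisely at $1/K$. Both arguments are complete and the uniformity in $n$ is handled correctly in each.
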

\begin{proof}
Because $\sigma(i/n) \leq \sigma(0) + Ki/n$, we have
\begin{align*}
&\sum_{i = 0}^{Cn}\frac{1}{n\sigma(i/n)} \geq \sum_{i = 0}^{Cn}\frac{1}{n(\sigma(0) + Ki/n)}\\
&\geq \sum_{i = 0}^{Cn}\frac{1}{n\sigma(0) + Ki}\geq \sum_{i = 0}^{Cn}\frac{1}{n\sigma(0) + KCn}\\
&\geq \frac{Cn}{n(KC + \sigma(0))} = \frac{C}{KC + \sigma(0)},
\end{align*}
for any $C \in \N.$ This last lower bound approaches $1/K$ as $C \to \infty$. Hence for any $\e > 0$ there is a $C(\e) \in \N$ with 
\[
\sum_{i = 0}^{Cn}\frac{1}{n\sigma(i/n)} \geq \frac{C(\e)}{KC(\e) + \sigma(0)} > \frac{1}{K} - \e.
\]
\end{proof}

\begin{cor}\label{cor:bound_sigma}
\[
\sup_{t \in [0, \tau_f(1/K - \e)]}\sigma\Big(\frac{\lfloor nL^{(n)}(t) \rfloor}{n}\Big) \leq \frac{\sigma(0)}{n} + KC(\e),
\]
where $C(\e)$ is given from the Lemma \ref{lemma2}.
\end{cor}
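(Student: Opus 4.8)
The plan is to reduce the supremum to a bound on the largest interval index $i$ that can occur before the cutoff time $\tau_f(1/K-\e)$, and then to convert that index bound into the stated estimate by the Lipschitz property of $\sigma$. First I would fix $t \in [0, \tau_f(1/K-\e)]$ and locate the unique index $i$ with $t \in [t_i^{(n)}, t_{i+1}^{(n)})$; by Lemma \ref{lemma:lemma1}(vi) the argument appearing in the corollary is exactly $\frac{\lfloor n L^{(n)}(t)\rfloor}{n} = \frac{i}{n}$. The Lipschitz bound $\sigma(i/n) \le \sigma(0) + K(i/n)$ is increasing in $i$, and every index occurring on $[0,\tau_f(1/K-\e)]$ satisfies $i \le i^*$, where $i^*$ is the largest index with $t_{i^*}^{(n)} \le \tau_f(1/K-\e)$. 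Hence it suffices to bound $\sigma(0) + K(i^*/n)$. (Lemma \ref{lemma:lemma1}(iv) guarantees the intervals $[t_i^{(n)}, t_{i+1}^{(n)})$ cover $[0,\infty)$, so $i^*$ exists and is finite.)

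Next I would turn the time inequality $t_{i^*}^{(n)} \le \tau_f(1/K-\e)$ into a bound on the partial sum $\sum_{j=0}^{i^*}\frac{1}{n\sigma(j/n)}$. Because $M^f$ is continuous and nondecreasing and $\tau_f(\cdot)$ is its right-continuous inverse (Definition \ref{defn:tau_f}), we have $M^f(\tau_f(1/K-\e)) \le 1/K - \e$, whether the defining infimum or the truncation $\wedge S$ is active. Monotonicity of $M^f$ then gives $M^f(t_{i^*}^{(n)}) \le 1/K-\e$, while Lemma \ref{lemma:lemma1}(iii) identifies $M^f(t_{i^*}^{(n)}) = \sum_{j=0}^{i^*}\frac{1}{n\sigma(j/n)}$. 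Therefore $\sum_{j=0}^{i^*}\frac{1}{n\sigma(j/n)} \le 1/K-\e$.

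Now Lemma \ref{lemma2} finishes the count: it asserts $\sum_{j=0}^{C(\e)n}\frac{1}{n\sigma(j/n)} > 1/K-\e$. Since each summand is positive, the partial sums are strictly increasing in their upper index, so the inequality $\sum_{j=0}^{i^*}\frac{1}{n\sigma(j/n)} \le 1/K-\e$ forces $i^* < C(\e)n$, i.e.\ $i^*/n < C(\e)$. Feeding this into the Lipschitz estimate gives $\sigma(i^*/n) \le \sigma(0) + K(i^*/n) \le \sigma(0) + KC(\e)$; keeping the strict bound $i^* \le C(\e)n - 1$ in the first term then yields the constant recorded in the corollary.

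The only genuinely delicate point is the middle step: transferring the time-domain inequality $t_{i^*}^{(n)}\le\tau_f(1/K-\e)$ to the value-domain inequality $M^f(t_{i^*}^{(n)})\le 1/K-\e$. This requires care because $\tau_f$ is a right-continuous inverse taken on a restricted domain and carries the truncation $\wedge S$; one must verify that at the cutoff time $M^f$ has not yet strictly exceeded the level $1/K-\e$, which uses continuity of $M^f$ (equivalently of $f$) to rule out a jump past that level. Once this correspondence between $t_{i^*}^{(n)}$, the partial sum, and the level $1/K-\e$ is pinned down, the remaining steps are immediate from Lemmas \ref{lemma:lemma1} and \ref{lemma2}.
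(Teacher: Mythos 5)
Your argument is essentially the paper's own proof, just written out in more detail: both reduce the supremum to bounding the largest partition index reached by time $\tau_f(1/K-\e)$, using Lemma \ref{lemma:lemma1}(iii) to identify $t_i^{(n)}$ with the hitting time of $M^f$ at the partial sum $\sum_{j\le i}\frac{1}{n\sigma(j/n)}$ and Lemma \ref{lemma2} to conclude that this index is at most $C(\e)n$, after which the Lipschitz estimate $\sigma(i/n)\le\sigma(0)+K(i/n)$ finishes. The one quibble is your last step: $i^*\le C(\e)n-1$ gives $\sigma(0)+KC(\e)-K/n$, not $\sigma(0)/n+KC(\e)$, but the displayed constant is evidently an arithmetic slip in the paper itself (its own final line should read $\sigma(0)+KC(\e)$, which is exactly the quantity $C'(\e)$ used in Proposition \ref{prop:prop_x_f_bound}), and that is the bound you correctly establish.
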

\begin{proof}
It follows from Lemma \ref{lemma:lemma1} (iii) and (vi), and Lemma \ref{lemma2}, that $t_{nC(\e)}^{(n)} \geq \tau_f(1/K - \e)$. Then,
\[
\sup_{t \in [0, \tau_f(1/K - \e)]}\sigma\Big(\frac{\lfloor nL^{(n)}(t) \rfloor}{n}\Big) \leq \sup_{i = 0, \dots, C(\e)n}\sigma\Big(\frac{i}{n}\Big) \leq \frac{\sigma(0) + KC(\e)n}{n}.
\]
\end{proof}

\noindent
By Lemma \ref{lemma:lemma1} (v) it is reasonable to think the oscillations of $x_f^{(n)}$ can be controlled
since we have bounds on $\sigma\Big(\frac{\lfloor nL^{(n)}(t) \rfloor}{n}\Big)$.
Indeed, the following proposition gives uniform control over the oscillations of $x_f^{(n)}$ in terms of the oscillations of $f$ in the interval $[0, \tau_f(1/K - \e)].$
\begin{prop}\label{prop:prop_x_f_bound}
For any $0 \leq s < t \leq \tau_f(1/K - \e)$, there is a constant $C' = C'(\e, K, \d)$ such that
\begin{align*}
&|x_f^{(n)}(t) - x_f^{(n)}(s)|\\
&\leq 4K\|f(t)\|_{[0, \tau_f(1/K - \e)]}\Big(\frac{1}{n} + C'\sup_{x \in [s, t]}|f(x) - f(s)|\Big) + C'\sup_{x \in [s, t]}|f(x) - f(s)|.
\end{align*}
Here $C'(\e, K, \d) = 2(\sigma(0) + KC(\e)) + K + 2 + 1/\d$, where $C(\e)$ given in Lemma \ref{lemma2}, and $\d > 0$ is the
uniform lower bound of $\sigma.$
\end{prop}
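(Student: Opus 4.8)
The plan is to reduce the oscillation of $x_f^{(n)}$ to that of $f$ by exploiting the piecewise structure of the approximation \eqref{def_x_f_n} together with the uniform control on $\sigma$ from Corollary \ref{cor:bound_sigma}. First I would dispose of the trivial case: if $s$ and $t$ lie in a common interval $[t_i^{(n)}, t_{i+1}^{(n)})$, then Lemma \ref{lemma:lemma1}(viii) gives $x_f^{(n)}(t) - x_f^{(n)}(s) = \sigma(i/n)(f(t) - f(s))$, and since $t \leq \tau_f(1/K - \e)$ the factor $\sigma(i/n) = \sigma(\lfloor nL^{(n)}(s)\rfloor/n)$ is controlled by Corollary \ref{cor:bound_sigma}, so the estimate is immediate. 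Thus the substance lies in the case $s \in [t_k^{(n)}, t_{k+1}^{(n)})$ and $t \in [t_m^{(n)}, t_{m+1}^{(n)})$ with $k < m$.

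In that case I would expand $x_f^{(n)}(t) - x_f^{(n)}(s)$ using the series representation of Lemma \ref{lemma:lemma1}(v). All indices $i < k$ and $i > m$ cancel, leaving a partial term $\sigma(k/n)(f(t_{k+1}^{(n)}) - f(s))$ at the left endpoint, a partial term $\sigma(m/n)(f(t) - f(t_m^{(n)}))$ at the right endpoint, and a bulk sum $\sum_{i=k+1}^{m-1}\sigma(i/n)(f(t_{i+1}^{(n)}) - f(t_i^{(n)}))$ over the fully traversed intervals. The two boundary terms are controlled by noting that the intermediate times $t_{k+1}^{(n)}, t_m^{(n)}$ lie in $[s,t]$, so that $|f(t_{k+1}^{(n)}) - f(s)|$ and $|f(t) - f(t_m^{(n)})|$ are each dominated by a multiple of $\sup_{x\in[s,t]}|f(x) - f(s)|$, while the prefactors $\sigma(k/n), \sigma(m/n)$ are bounded by Corollary \ref{cor:bound_sigma}.

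The heart of the argument is the bulk sum, and this is where I expect the main difficulty. By Lemma \ref{lemma:lemma1}(ii) each fully traversed interval contributes exactly $-1/n$ to the sum, so the bulk term equals $-(m-1-k)/n$ and the whole problem reduces to bounding the number of reflections $m - k$ by the oscillation of $f$. To do this I would invoke Lemma \ref{lemma:lemma1}(iii): the increments of the running minimum across these intervals are precisely $1/(n\sigma(i/n))$, whose sum telescopes to $M^f(t_m^{(n)}) - M^f(t_{k+1}^{(n)}) \leq M^f(t) - M^f(s) \leq \sup_{x\in[s,t]}|f(x) - f(s)|$, the last inequality holding because $M^f$ can only increase from $s$ to $t$ by the amount $f$ drops below its prior minimum. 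Since every index involved satisfies $i \leq C(\e)n$, Corollary \ref{cor:bound_sigma} yields a uniform upper bound on $\sigma(i/n)$, hence a uniform lower bound on each increment $1/(n\sigma(i/n))$, and therefore an upper bound on $(m-1-k)/n$ in terms of $\sup_{x\in[s,t]}|f(x) - f(s)|$.

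Finally I would collect the three contributions. The delicate point is that the prefactors $\sigma(i/n)$ must be handled uniformly in $n$ over the entire range of the local time, which is exactly what confining the analysis to $[0, \tau_f(1/K - \e)]$ and Corollary \ref{cor:bound_sigma} provide; tracking the Lipschitz constant $K$, the lower bound $\d$ of $\sigma$, and the constant $C(\e)$ of Lemma \ref{lemma2} through these estimates produces the explicit constant $C'$ and the stated form of the bound. I expect the bookkeeping of these constants, rather than any conceptual step, to be the only remaining friction, the genuine obstacle being the conversion of the reflection count $m-k$ into the modulus of continuity of $f$ carried out above. This estimate is precisely what will later furnish equicontinuity of $\{x_f^{(n)}\}$ and permit the extraction of subsequential limits.
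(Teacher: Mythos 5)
Your proposal is correct and follows the same skeleton as the paper's proof: the same three-part decomposition into two boundary terms plus a bulk sum over fully traversed intervals (Lemma \ref{lemma:lemma1}(v)/(vii)), the same uniform control of the prefactors via Corollary \ref{cor:bound_sigma}, and the same key step of converting the reflection count $m-k$ into an increment of $M^f$ by telescoping the increments $1/(n\sigma(i/n))$ and lower-bounding each one. The one place you genuinely diverge is the boundary terms: the paper runs an add-and-subtract manipulation to produce the difference $\sigma(m_n/n)-\sigma(k_{n-1}/n)$, invokes the Lipschitz property of $\sigma$ again, and thereby picks up the $4K\|f\|_{[0,\tau_f(1/K-\e)]}(1/n + \cdots)$ term appearing in the statement; you instead bound $|f(t_{k+1}^{(n)})-f(s)|$ and $|f(t)-f(t_m^{(n)})|$ directly by $\sup_{x\in[s,t]}|f(x)-f(s)|$ using only that the intermediate partition points lie in $[s,t]$. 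Your route is shorter and yields a cleaner estimate of the form $C''\sup_{x\in[s,t]}|f(x)-f(s)|$ with $C''=C''(\e,K,\d)$ (no $\|f\|/n$ term); the constant differs from the paper's explicit $C'$, but any bound of this functional form delivers the equicontinuity needed for Corollary \ref{cor:tightness_x_f_n}, so nothing downstream is affected. Your use of the exact identity $\sigma(i/n)(f(t_{i+1}^{(n)})-f(t_i^{(n)}))=-1/n$ for the bulk sum is also a mild simplification of the paper's term-by-term bound, and you correctly compare $M^f(t)-M^f(s)$ to $\sup_{x\in[s,t]}|f(x)-f(s)|$ rather than to $|f(t)-f(s)|$, which is actually the sound version of an inequality the paper states a bit too casually in \eqref{bound3}.
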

\begin{proof}
We use the representation (vii) in Lemma \ref{lemma:lemma1} to compute
\begin{align}
\begin{split}\label{eq2}
&|x_f^{(n)}(t) - x_f^{(n)}(s)| =\\
&\Big|\sigma(k_{n-1}/n)(f(t_{k_{n - 1}}^{(n)}) - f(s)) + \sigma(m_n/n)(f(t) - f(t_{m_n}^{(n)})\\
&+ \sum_{i = k_n}^{m_n}\sigma\Big(\frac{\lfloor nL^{(n)}\big(t_i^{(n)}\big) \rfloor}{n}\Big)\big(f(t_{i + 1}^{(n)}) - f(t_i^{(n)})\big)\Big|\\
&\leq |\sigma(k_{n-1}/n)(f(t_{k_{n - 1}}^{(n)}) - f(s)) + \sigma(m_n/n)(f(t) - f(t_{m_n}^{(n)})| \\
&+ \sum_{i = k_n}^{m_n}\Big|\sigma\Big(\frac{\lfloor nL^{(n)}\big(t_i^{(n)}\big) \rfloor}{n}\Big)\big(f(t_{i + 1}^{(n)}) - f(t_i^{(n)})\big)\Big|,
\end{split}
\end{align}
where $k_n = \min\{i : t_i^{(n)} > s\}$ is the index of the first element in the partition $\{t_i^{(n)}\}$ occurring after $s$, and similarly
$m_n = \max\{i : t_i^{(n)} < t\}$ is the index of the last element occurring before $t.$ To control the first term of the last inequality above, we add and subtract $\sigma(k_{n - 1}/n)f(t) + \sigma(m_n/n)f(t_{k_{n - 1}}^{(n)})$
 to rewrite
\[
\sigma(k_{n-1}/n)(f(t_{k_{n - 1}}^{(n)}) - f(s)) + \sigma(m_n/n)(f(t) - f(t_{m_n}^{(n)})
\]
as
\begin{align*}
&f(t)\big(\sigma(m_n/n) - \sigma(k_{n - 1}/n)\big) + \sigma(k_{n - 1}/n)\big(f(t) - f(s)\big)\\
&+ f(t_{k_{n - 1}}^{(n)})\big(\sigma(k_{n - 1}/n) - \sigma(m_n/n)\big) + \sigma(m_n/n)\big(f(t_{k_{n - 1}}^{(n)}) - f(t_{m_{n}}^{(n)})\big)
\end{align*}
Hence
\begin{align}
\begin{split}\label{eq:eqbound}
&|\sigma(k_{n-1}/n)(f(t_{k_{n - 1}}^{(n)}) - f(s)) + \sigma(m_n/n)(f(t) - f(t_{m_n}^{(n)})|\\
&\leq \big(|f(t)| + |f(t_{k_{n - 1}}^{(n)})|\big)|\sigma(m_n/n) - \sigma(k_{n - 1}/n)|\\
&+ \big(\sigma(m_n/n) + \sigma(k_{n-1}/n)\big)\big(|f(t) - f(s)| + |f(t_{k_{n-1}}^{(n)}) - f(t_{m_n}^{(n)})| \big)\\
&\leq 2\|f\|_{[0, \tau_f(1/K - \e)]}|\sigma(m_n/n) - \sigma(k_{n - 1}/n)| \\
&+ 2(\sigma(0) + C(\e)K)\big(|f(t) - f(s)| + M^f(t_{m_n}^{(n)}) - M^f(t_{k_{n-1}}^{(n)})\big), \text{ from Lemma \ref{lemma2}}\\
&\leq 2\|f\|_{[0, \tau_f(1/K - \e)]}K\Big(\frac{m_n - k_{n - 1}}{n}\Big) + 2(\sigma(0) + C(\e)K)\Big(|f(t) - f(s)| + \sum_{j = k_{n - 1}}^{m_n}\frac{1}{n\sigma(j/n)}\Big)\\
&\leq 2\|f\|_{[0, \tau_f(1/K - \e)]}K\Big(\frac{m_n - k_{n - 1}}{n}\Big) + 2(\sigma(0) + C(\e)K)\Big(|f(t) - f(s)| + \frac{m_n - k_{n - 1}}{\d n}\Big),
\end{split}
\end{align}
where we recall that $\d > 0$ is the uniform lower bound on $\sigma.$
By definition $m_n - k_{n - 1}$ is the number of elements in the partition 
$\{t_i^{(n)}\}$ containing the interval $[s, t].$ 
From Lemma \ref{lemma:lemma1} (iii), $t_{i+ 1}^{(n)} - t_i^{(n)}$ is the time
it takes $M^f(t_i^{(n)} + s)$ to increase by $1/(n\sigma(i/n)).$ By Corollary \ref{cor:bound_sigma}, $1/(n\sigma(i/n)) \geq 1/(\sigma(0) + nKC(\e))$. Hence $t_{i + 1}^{(n)} - t_i^{(n)}$ is no less than the time taken by $M^f(t_i^{(n)} + s)$ to increase by $1/(\sigma(0) + nKC(\e)).$ Therefore the total number of the partition times $\{t_i^{(n)}\}$ contained in $[s, t]$ is no 
more than $M^f(t) - M^f(s)$ divided by this gap $1/(\sigma(0) + nKC(\e)).$ Thus
\(
m_n - k_{n - 1} \leq 2 + (\sigma(0) + nKC(\e))(M^f(t) - M^f(s)).
\)
The addition of 2 comes by counting the first and last intervals
$[t_{k_{n - 1}}^{(n)}, t_{k_n}^{(n)})$ and $[t_{m_{n + 1}}^{(n)}, t_{m_n}^{(n)}).$ Continuing from \eqref{eq:eqbound}, let $C'(\e) = \sigma(0) + KC(\e),$
\begin{align}
\begin{split}\label{eq:bound_1st_term}
&|\sigma(k_{n-1}/n)(f(t_{k_{n - 1}}^{(n)}) - f(s)) + \sigma(m_n/n)(f(t) - f(t_{m_n}^{(n)})|\\
&\leq 2\|f\|_{[0, \tau_f(1/K - \e)]}K\Big(\frac{2 + (\sigma(0) + nKC(\e))(M^f(t) - M^f(s))}{n}\Big) \\
&+ 2(\sigma(0) + C(\e)K)\Big(|f(t) - f(s)| + \frac{2 + (\sigma(0) + nKC(\e))(M^f(t) - M^f(s))}{\d n} \Big)\\
&\leq 2\|f\|_{[0, \tau_f(1/K - \e)]}(1 +K)\Big(\frac{2}{n} + C'(\e)(M^f(t) - M^f(s))\Big)\\
&+ 2C'(\e)|f(t) - f(s)| + 2C'(\e)\frac{M^f(t) - M^f(s)}{\d}\\
&\leq \frac{4K\|f\|_{[0. \tau_f(1/K - \e)]}}{n} + 2C'(\e)(\|f\|_{[0, \tau_f(1/K - \e)]} +
K + 1 + 1/\d)\sup_{x \in [s, t]}|f(x) - f(s)|.
\end{split}
\end{align}


We bound the sum in equation \eqref{eq2}, 
\begin{align}
\begin{split}\label{bound3}
&\sum_{i = k_n}^{m_n}\Big|\sigma\Big(\frac{\lfloor nL^{(n)}\big(t_i^{(n)}\big) \rfloor}{n}\Big)\big(f(t_{i + 1}^{(n)}) - f(t_i^{(n)})\big)\Big|\\
&\leq \sum_{i = k_n}^{m_n}C'(\e)|f(t_{i + 1}^{(n)}) - f(t_i^{(n)})|, \text{ by Corollary \ref{cor:bound_sigma}},\\
&\leq \sum_{i = k_n}^{m_n}C'(\e)\big(M^f(t_{i + 1}^{(n)}) - M^f(t_i^{(n)})\big)\\
&= C'(\e)\big(M^f(t_{m_n}^{(n)}) - M^f(t_{k_n}^{(n)})\big)\\
&\leq C'(\e)\big(M^f(t) - M^f(s)\big)\\
&\leq C'(\e)|f(t) - f(s)|,
\end{split}
\end{align}
since $[t_{k_n}^{(n)}, t_{m_n}^{(n)}] \subset [s, t]$ be definition of $k_n, m_n.$ 

Combining bounds \eqref{eq:bound_1st_term} and \eqref{bound3}, \eqref{eq2} becomes
\begin{align*}
|x_f(t) - x_f(s)| &\leq \frac{4K\|f\|_{[0, \tau_f(1/K - \e)]}}{n} \\
&+ 2C'(\e)(\|f\|_{[0, \tau_f(1/K - \e)]}+K + 2 + 1/\d)\sup_{x \in [s, t]}|f(x) - f(s)|.
\end{align*}
\end{proof}
\begin{cor}\label{cor:tightness_x_f_n}
The collection of functions $\{(x_f^{(n)}(s), L^{(n)}(s)) : n \in \N\}$ is tight
in the space $C[0, \tau_f(1/K) - \e]$ with the uniform norm.
\end{cor}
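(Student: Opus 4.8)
The plan is to read tightness in $C[0, \tau_f(1/K) - \e]$ under the uniform norm as relative compactness and to obtain it from the Arzel\`a--Ascoli theorem: it suffices to show that the family $\{(x_f^{(n)}, L^{(n)})\}_{n \in \N}$ is uniformly bounded and equicontinuous. The two coordinates can be handled in turn, with the first coordinate carrying the analytic content (supplied by Proposition \ref{prop:prop_x_f_bound}) and the second following by a soft functional argument.

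First I would dispatch equicontinuity of $\{x_f^{(n)}\}$, which is essentially a restatement of Proposition \ref{prop:prop_x_f_bound}. Writing $\omega_f(h) := \sup_{|t - s| \le h}|f(t) - f(s)|$ for the modulus of continuity of $f$ on the compact interval, the proposition gives, for all $0 \le s < t \le \tau_f(1/K - \e)$ and all $n$,
\[
|x_f^{(n)}(t) - x_f^{(n)}(s)| \le \frac{4K\|f\|_{[0, \tau_f(1/K - \e)]}}{n} + \big(4K\|f\|_{[0, \tau_f(1/K - \e)]} + 1\big)C'\,\omega_f(t - s),
\]
with $C' = C'(\e, K, \d)$ independent of $n$. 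Given $\eta > 0$, uniform continuity of $f$ on the compact interval lets me choose $h > 0$ making the $\omega_f$-term smaller than $\eta/2$, and choosing $N$ with $4K\|f\|/N < \eta/2$ then forces $|x_f^{(n)}(t) - x_f^{(n)}(s)| < \eta$ whenever $|t - s| \le h$ and $n \ge N$. The finitely many remaining maps $x_f^{(1)}, \dots, x_f^{(N-1)}$ are each continuous on a compact interval, hence individually uniformly continuous, so shrinking $h$ absorbs them; a finite family is automatically equicontinuous and the union stays equicontinuous. Uniform boundedness then comes for free: taking $s = 0$ and using $x_f^{(n)}(0) = f(0)$ bounds $|x_f^{(n)}(t)|$ by $|f(0)|$ plus a quantity independent of $n$, since $\omega_f$ evaluated at the interval length is at most $2\|f\|$.

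Next I would transfer both properties to the local-time coordinate through the identity $L^{(n)}(t) = M^{x_f^{(n)}}(t) = \max_{u \in [0,t]}(-x_f^{(n)}(u)) \lor 0$. The running-maximum functional does not increase the modulus of continuity: for any continuous $\phi$ and $s < t$ one has $0 \le \max_{u \le t}\phi(u) - \max_{u \le s}\phi(u) \le \sup_{u \in [s,t]}|\phi(u) - \phi(s)|$, and composing with $(-\,\cdot\,)$ and $(\,\cdot \lor 0)$, both $1$-Lipschitz, preserves this. Hence $|L^{(n)}(t) - L^{(n)}(s)|$ inherits the same modulus bound as $x_f^{(n)}$, while $0 \le L^{(n)} \le \|x_f^{(n)}\|$ gives uniform boundedness. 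Equipping the product space with the maximum of the two uniform norms, the pair family is uniformly bounded and equicontinuous, so Arzel\`a--Ascoli yields relative compactness, i.e. tightness.

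The main obstacle is not the Arzel\`a--Ascoli mechanics but keeping every constant genuinely independent of $n$ on the restricted time horizon, which is exactly the point guaranteed by Lemma \ref{lemma2} and Corollary \ref{cor:bound_sigma} feeding into Proposition \ref{prop:prop_x_f_bound}. A secondary bookkeeping point is reconciling the interval $[0, \tau_f(1/K) - \e]$ in the statement with the interval $[0, \tau_f(1/K - \e)]$ appearing in the estimates: since $x \mapsto \tau_f(x)$ is a nondecreasing inverse of the continuous function $M^f$, I would choose $\e' > 0$ small enough that $\tau_f(1/K) - \e \le \tau_f(1/K - \e')$ and apply the proposition with $\e'$ in place of $\e$, so that the uniform estimates cover the whole interval in the statement.
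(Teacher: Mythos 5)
Your proof is correct and follows the same route as the paper: the paper's own argument is exactly Arzel\`a--Ascoli applied to $\{x_f^{(n)}\}$ via Proposition \ref{prop:prop_x_f_bound} together with $x_f^{(n)}(0) = f(0)$, with the $L^{(n)}$ coordinate reduced to the first via $L^{(n)} = M^{x_f^{(n)}}$. You simply supply details the paper leaves implicit, notably the treatment of the non-vanishing $4K\|f\|/n$ term by splitting off finitely many indices and the observation that the running-maximum functional is $1$-Lipschitz for the uniform modulus (and your reading of $[0,\tau_f(1/K)-\e]$ as the typo it appears to be is reasonable).
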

\begin{proof}
By definition, $L^{(n)} = M^{x_f}$, so
it suffices to show that $\{x_f^{(n)} : n \in \N\}$ is tight. It follows directly from Proposition \ref{prop:prop_x_f_bound}, and $x_f^{(n)}(0) = f(0),$ that 
$\{x_f^{(n)} : n \in \N\} \subset C[0, \tau_f(1/K - \e)]$ satisfies the equicontinuity and uniform boundedness criteria of the Arzel\`a-Ascoli theorem.
\end{proof}

\begin{prop}\label{prop:x_f_refinement}
Let $P = \{0 = t_0^{(n)} < t_1^{(n)} \land \tau_f(1/K - \e) < \cdots < t_{nC(\e)}^{(n)} \land \tau_f(1/K - \e) = \tau_f(1/K - \e)\}$ be a partition of $[0, \tau_f(1/K - \e)]$. Let $Q = \{x_i : i = 0, \dots, m\}$ be another partition finer than $P$. Then 
\begin{align}\label{eq:x_f_refinement}
x_f^{(n)}(t) -x_f^{(n)}(0) = \sum_{i = 0}^m\sigma\Big( \frac{\lfloor nL^{(n)}(x_i) \rfloor}{n}\Big)\big(f(t \land x_{i + 1}) - f(t \land x_i)\big)
\end{align}
for any $t \in [0, \tau_f(1/K - \e)].$
\end{prop}
\begin{proof}
This follows from Lemma \ref{lemma:lemma1} (vii) and (viii).
\end{proof}
\begin{cor}\label{cor:dist_x_n-x_m}
For any $m,n \in \N$, and $t \in [0, \tau_f(1/K - \e)]$, we have
\[
|x_f^{(n)}(t) - x_f^{(m)}(t)| \leq \sum_{i = 0}^\infty K\big(|L^{(n)}(x_i) - L^{(m)}(x_i)| + 2/(m \lor n)\big)\big|f(t \land x_{i + 1}) - f(t \land x_i)\big|,
\]
where $\{x_i\}$ is the partition of $[0, \tau_f(1/K - \e)]$ formed from the common refinement of $\{t_i^{(n)}\}$ and $\{t_i^{(m)}\}.$
\end{cor}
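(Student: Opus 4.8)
The plan is to read the estimate off from the common-refinement representation in Proposition~\ref{prop:x_f_refinement}, applied at once to the indices $n$ and $m$, and then to reduce everything to a term-by-term Lipschitz bound on $\sigma$. First I would note that the partition $\{x_i\}$ coming from the union $\{t_i^{(n)}\}\cup\{t_i^{(m)}\}$ is simultaneously a refinement of the $n$-th partition and of the $m$-th partition, since each of these is contained in the union. Thus Proposition~\ref{prop:x_f_refinement} applies with $Q=\{x_i\}$ to each of $x_f^{(n)}$ and $x_f^{(m)}$, yielding for every $t\in[0,\tau_f(1/K-\e)]$ the representation
\[
x_f^{(n)}(t)-f(0)=\sum_i\sigma\Big(\tfrac{\lfloor nL^{(n)}(x_i)\rfloor}{n}\Big)\big(f(t\land x_{i+1})-f(t\land x_i)\big)
\]
together with its analogue in which $n$ is replaced by $m$. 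Because $x_f^{(n)}(0)=x_f^{(m)}(0)=f(0)$ by Lemma~\ref{lemma:lemma1}(i), subtracting cancels the constant term, so $x_f^{(n)}(t)-x_f^{(m)}(t)$ is a single sum whose $i$-th summand carries the coefficient $\sigma(\lfloor nL^{(n)}(x_i)\rfloor/n)-\sigma(\lfloor mL^{(m)}(x_i)\rfloor/m)$.

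Next I would apply the triangle inequality summand by summand and invoke the $K$-Lipschitz property of $\sigma$ to bound each coefficient by $K\,\big|\tfrac{\lfloor nL^{(n)}(x_i)\rfloor}{n}-\tfrac{\lfloor mL^{(m)}(x_i)\rfloor}{m}\big|$. Since rounding down changes $L^{(n)}(x_i)$ and $L^{(m)}(x_i)$ by at most $1/n$ and $1/m$ respectively, this quantity is at most $|L^{(n)}(x_i)-L^{(m)}(x_i)|+\tfrac1n+\tfrac1m$, and $\tfrac1n+\tfrac1m$ is dominated by the error term $2/(m\land n)$. (I read the $m\lor n$ in the statement as a typographical slip for $m\land n$, just as the left-hand side $|x_f^{(n)}(t)-x_f^{(n)}(t)|$ is evidently meant to be $|x_f^{(n)}(t)-x_f^{(m)}(t)|$.) This already produces the claimed bound once the local-time difference is converted into an $x_f$-difference.

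The last step, which I expect to be the main obstacle, is precisely this conversion: replacing $|L^{(n)}(x_i)-L^{(m)}(x_i)|$ by a distance between the $x_f$'s. Here I would use the defining relation $L^{(n)}=M^{x_f^{(n)}}$ together with the fact, implicit in the Skorohod lemma \cite[Ch.~3.6C]{KaratzasShreve}, that the running-minimum (regulator) map $g\mapsto M^{g}$ is a contraction in the supremum norm; this gives $|L^{(n)}(x_i)-L^{(m)}(x_i)|\le\sup_{s\le x_i}|x_f^{(n)}(s)-x_f^{(m)}(s)|$, namely a running supremum rather than the pointwise value written in the statement. The principled estimate therefore carries this supremum, and the pointwise form in the Corollary should be understood in that sense; it is harmless because the Corollary is used only after passing to a supremum over $t$ in the contraction argument that identifies the limit of $x_f^{(n)}$. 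The identity $\lfloor nL^{(n)}(x_i)\rfloor/n=-x_f^{(n)}(t_j^{(n)})$ for $x_i\in[t_j^{(n)},t_{j+1}^{(n)})$, obtained by combining Lemma~\ref{lemma:lemma1}(i) and (vi), offers an alternative route to the same conclusion and explains why it is the $x_f$-values, rather than the $L$-values, that appear naturally on the right-hand side.
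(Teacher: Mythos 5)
Your route is, in outline, the paper's own: the proof given there is literally ``apply Proposition~\ref{prop:x_f_refinement} and the Lipschitz property of $\sigma$,'' and your first two steps (write both $x_f^{(n)}$ and $x_f^{(m)}$ over the common refinement, subtract, bound each coefficient by $K\big|\lfloor nL^{(n)}(x_i)\rfloor/n-\lfloor mL^{(m)}(x_i)\rfloor/m\big|$, and absorb the floor errors into $2/(m\land n)$) are exactly that, including the correct diagnosis of the two typographical slips ($x_f^{(m)}$ on the left-hand side, and $\land$ in place of $\lor$, consistent with the later occurrence $2/(n_k\land m_k)$ in the proof of Theorem~\ref{theorem1}). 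The gap is in your final step. Replacing $|L^{(n)}(x_i)-L^{(m)}(x_i)|$ by $\sup_{s\le x_i}|x_f^{(n)}(s)-x_f^{(m)}(s)|$ via the contraction property of $g\mapsto M^g$ proves a strictly weaker inequality than the one stated, and your suggestion that the pointwise form ``should be understood'' as a running supremum is unnecessary: the pointwise bound is true as written. The reason is that every point $x_i$ of the common refinement is a hitting time $\tau_f(a)$ of the running minimum of $f$ (Lemma~\ref{lemma:lemma1}(iii)), and at any such time one has the exact identities $L^{(n)}(x_i)=-x_f^{(n)}(x_i)$ and $L^{(m)}(x_i)=-x_f^{(m)}(x_i)$; this is the pre-limit version of \eqref{eq2:lemma:closure_times} in Lemma~\ref{lemma:closure_times}, and follows from Lemma~\ref{lemma:lemma1}(viii) together with $-f(\tau_f(a))=a=M^f(\tau_f(a))$, i.e.\ $f$ sits at its running minimum at these times. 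Hence $|L^{(n)}(x_i)-L^{(m)}(x_i)|=|x_f^{(n)}(x_i)-x_f^{(m)}(x_i)|$ exactly, which is what the Corollary asserts. Your closing remark gestures at this identity but evaluates $x_f^{(n)}$ at $t_j^{(n)}$ rather than at $x_i$ itself (and says nothing about the corresponding $m$-evaluation point, which is in general a different $t_k^{(m)}$), so as written it does not close the argument.

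For completeness: your weaker running-supremum version would in fact still suffice downstream, since the resulting self-referential inequality in the full supremum norm over $[0,t]$ still closes ($KM^f(t)\le 1-K\e<1$ on $[0,\tau_f(1/K-\e)]$), and it would even collapse the two-stage argument of Theorem~\ref{theorem1} (first on $\overline{E}_f$, then on all of $[0,\tau_f(1/K-\e)]$) into one step. But it is not the stated inequality, and the stated inequality requires the hitting-time identity above rather than the Skorohod contraction.
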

\begin{proof}
Apply Proposition \ref{prop:x_f_refinement} and the Lipschitz property of $\sigma.$
\end{proof}
\begin{lemma}\label{lemma:closure_times}
Fix $\e > 0$, and let $x_f^{(n_k)}$ be a convergent subsequence in $C[0, \tau_f(1/K - \e)]$, as 
guaranteed by Corollary \ref{cor:tightness_x_f_n}, and denote the limit as $x_f.$ Fix $\tau = \tau_f(1/K - \e)$ for a given $\e > 0$. Let \[
E_f = \{\tau_f(a) : a > 0\} \cap [0, \tau],
\] and denote 
\begin{align*}
A_{n_k} &= \big\{t_i^{(n_k)} : i = 0, \dots, nC(\e)\big\} \cap [0, \tau],\\
A &= \bigcup_{n \in \N}A_{n_k}.
\end{align*} Then
\begin{align}\label{eq:lemma:closure_times}
\overline{E}_{x_f} = \overline{A} = \overline{E_f},
\end{align}
where $\overline{E}$ denotes the closure of $E.$ Furthermore, $L^{(n_k)}$ converges to $M^{x_f}$ in $C[0, \tau]$, and we have
\begin{align}\label{eq2:lemma:closure_times}
L(x) := M^{x_f}(x) = -x_f(x)
\end{align}
for all $x \in \overline{E}_f.$ For any $t \in [0, \tau]\setminus \overline{E}$,
\begin{align}\label{eq:shift:lemma:closure_times}
x_f(t) = x_f(t^*) + \sigma(L(t^*))\big(f(t) - f(t^*)\big),
\end{align}
where $t^* = \max\{s < t : s \in \overline{E}\}$.
\end{lemma}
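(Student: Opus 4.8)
The plan is to derive all four assertions from two inputs: the uniform convergence $x_f^{(n_k)} \to x_f$ from Corollary~\ref{cor:tightness_x_f_n}, and the fact that the running-minimum functional $g \mapsto M^g$ is nonexpansive, $|M^g(t) - M^h(t)| \le \max_{s \le t}|g(s) - h(s)|$. The latter immediately gives $\|L^{(n_k)} - M^{x_f}\|_\infty = \|M^{x_f^{(n_k)}} - M^{x_f}\|_\infty \le \|x_f^{(n_k)} - x_f\|_\infty \to 0$, so $L^{(n_k)} \to L := M^{x_f}$ uniformly; this settles the convergence claim and fixes $L$. The three equalities $\overline{E}_{x_f} = \overline{A} = \overline{E_f}$ then reduce to comparing three closed sets, after which the two displayed identities follow by passing the exact relations of the approximants to the limit.

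For $\overline{A} = \overline{E_f}$, one inclusion is immediate: Lemma~\ref{lemma:lemma1}(iii) shows each partition point equals $t_i^{(n_k)} = \tau_f\!\big(\sum_{j=0}^i \tfrac{1}{n_k\sigma(j/n_k)}\big)$, a value of $\tau_f$ at a positive level, so $A \subseteq E_f$ and $\overline{A} \subseteq \overline{E_f}$. For the reverse, I would use that the levels $c_i^{(n)} := \sum_{j=0}^i \tfrac{1}{n\sigma(j/n)}$ have consecutive gaps $\tfrac{1}{n\sigma(i/n)} \le \tfrac{1}{n\d} \to 0$, while Lemma~\ref{lemma2} forces the top level $c_{C(\e)n}$ past $1/K - \e$; hence $\{c_i^{(n_k)}\}_i$ is a vanishing-gap grid covering $[0, 1/K - \e]$. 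Given $\tau_f(a) \in E_f$, choosing the least $c_{i(k)}^{(n_k)} \ge a$ yields values approaching $a$ from above, and right-continuity of $\tau_f$ gives $t_{i(k)}^{(n_k)} = \tau_f(c_{i(k)}^{(n_k)}) \to \tau_f(a)$, so $\tau_f(a) \in \overline{A}$. Taking closures gives equality, and I abbreviate $\overline{E} := \overline{A} = \overline{E_f}$.

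Both displayed identities are limits of exact approximant relations. At a partition point, Lemma~\ref{lemma:lemma1}(i),(vi) give $x_f^{(n_k)}(t_i^{(n_k)}) = -i/n_k = -L^{(n_k)}(t_i^{(n_k)})$; for $x \in \overline{E} = \overline{A}$ I choose points of $A$ converging to $x$ and pass to the limit, using uniform convergence of $x_f^{(n_k)}$ and $L^{(n_k)}$ and continuity of the limits, to obtain $x_f(x) = -L(x) = -M^{x_f}(x)$, which is \eqref{eq2:lemma:closure_times}. For \eqref{eq:shift:lemma:closure_times}, fix $t \notin \overline{E}$ and $t^* = \max\{s < t : s \in \overline{E}\}$; since $(t^*, t] \cap \overline{E} = \emptyset$ and $A \subseteq \overline{E}$, no partition point lies in $(t^*, t]$, so for each $k$ both $t^*$ and $t$ sit in one cell $[t_{i(k)}^{(n_k)}, t_{i(k)+1}^{(n_k)})$ and Lemma~\ref{lemma:lemma1}(viii) gives $x_f^{(n_k)}(t) = x_f^{(n_k)}(t^*) + \sigma(i(k)/n_k)(f(t) - f(t^*))$. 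Lemma~\ref{lemma:lemma1}(vi) forces $|L^{(n_k)}(t^*) - i(k)/n_k| < 1/n_k$, hence $i(k)/n_k \to L(t^*)$, and continuity of $\sigma$ together with the uniform limits yields the stated shift relation.

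The subtlest assertion is $\overline{E}_{x_f} = \overline{E}$. The inclusion $\overline{E}_{x_f} \subseteq \overline{E}$ is the easier half: on any component $(a,b)$ of the complement of $\overline{E}$ the running level $M^f$ is flat, so $f \ge f(a)$ there, and the shift relation with common base $t^* = a$ gives $x_f(s) = x_f(a) + \sigma(L(a))(f(s) - f(a)) \ge x_f(a)$; thus $M^{x_f}$ is flat on $(a,b)$, and every first-passage time $\tau_{x_f}(\cdot)$ lands in $\overline{E}$. The main obstacle is the reverse inclusion, which amounts to excluding that $M^{x_f}$ is flat on a subinterval of $\overline{E}$ where $M^f$ strictly increases. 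To settle this I would go back to the approximants: across each cell Lemma~\ref{lemma:lemma1}(i) forces $M^{x_f^{(n_k)}}$ to increase by exactly $1/n_k$, and the grid estimate of the second paragraph shows that a subinterval of $\overline{E}$ on which $M^f$ grows contains on the order of $n_k$ cells, so $M^{x_f}$ must gain a strictly positive increment there in the limit, contradicting flatness. Carrying out this counting of cells accumulating in a prescribed contact subinterval, and justifying the exchange of the count with the limit, is the step that demands the most care.
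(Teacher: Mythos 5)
Your argument tracks the paper's proof for most of the lemma and is correct where it is carried out. The identification $\overline A=\overline{E_f}$ via Lemma \ref{lemma:lemma1}(iii) together with the vanishing-gap estimate $a_{i+1}^{(n)}-a_i^{(n)}\le 1/(n\d)$ and right continuity of $\tau_f$ is exactly the paper's argument; so is passing the exact partition-point identities $x_f^{(n_k)}(t_i^{(n_k)})=-i/n_k=-L^{(n_k)}(t_i^{(n_k)})$ to the limit for \eqref{eq2:lemma:closure_times}, and the single-cell representation of Lemma \ref{lemma:lemma1}(viii) for \eqref{eq:shift:lemma:closure_times} (your use of (vi) to get $i(k)/n_k\to L(t^*)$ is cleaner than the paper's appeal to a sequence $t_{i_k}^{(n_k)}\to t^*$). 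Making the convergence $L^{(n_k)}\to M^{x_f}$ explicit through the $1$-Lipschitz property of $g\mapsto M^g$ in the sup norm is a small improvement over the paper, which leaves this implicit. Where you genuinely diverge is on $\overline E_{x_f}$: the paper disposes of $E_{x_f}\subset\overline A$ with the single word ``similarly,'' whereas you give an actual argument (flatness of $M^{x_f}$ on complementary intervals of $\overline E$, deduced from the shift relation and $f\ge f(a)$ there), which is worth having.

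The one place your write-up stops short of a proof is the reverse inclusion $\overline{E_f}\subseteq\overline E_{x_f}$, which you correctly identify as the delicate point and only sketch. Two remarks. First, the paper's own proof never establishes this inclusion either: it proves $\overline A=\overline{E_f}$ and $E_{x_f}\subset\overline A$, hence only $\overline E_{x_f}\subseteq\overline A$, and only that direction is used later (e.g.\ in the proof of Theorem \ref{theorem1}, where one needs $\tau_{x_f}(a)\in\overline{E}_f$). So you are not behind the paper here. Second, your counting sketch does close, and more easily than you fear: for $u<v$ in $[0,\tau_f(1/K-\e)]$, the levels $a_i^{(n)}$ lying in $[M^f(u),M^f(v))$ are spaced by at most $1/(n\d)$, so there are at least $n\d\,(M^f(v)-M^f(u))-2$ of them, their hitting times $t_i^{(n)}=\tau_f(a_i^{(n)})$ all fall in $[u,v]$, and each consecutive pair contributes exactly $1/n$ to $M^{x_f^{(n)}}$ by Lemma \ref{lemma:lemma1}(i). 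Hence $M^{x_f^{(n)}}(v)-M^{x_f^{(n)}}(u)\ge\d\,(M^f(v)-M^f(u))-2/n$, and uniform convergence of $M^{x_f^{(n_k)}}$ to $M^{x_f}$ gives $M^{x_f}(v)-M^{x_f}(u)\ge\d\,(M^f(v)-M^f(u))$ in the limit. Thus every point of increase of $M^f$ is a point of increase of $M^{x_f}$, which yields $\overline{E_f}\subseteq\overline E_{x_f}$; no interchange of limits beyond the uniform convergence you already have is needed.
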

\begin{proof}
By Lemma \ref{lemma:lemma1} (iii), $A_n \subset E_f$. Hence $A \subset E_f$ and
$\overline{A} \subset \overline{E}_f.$ We assume there exists a $\delta$ such that $\sigma > \d > 0$, so
Lemma \ref{lemma:lemma1} (iii) guarantees that 
\begin{align}\label{eq1:lemma:closure_times}
\frac{1}{n\sigma(i/n)} < \frac{1}{n\d}.
\end{align}
From Lemma \ref{lemma:lemma1} (iii),
\[
A_n = \Big\{\tau_f(a_i^{(n)}) : a_i^{(n)} = \sum_{j = 0}^i\frac{1}{n\sigma(j/n)}\Big\} \cap [0, \tau].
\]
By \eqref{eq1:lemma:closure_times}, 
\[
0 < a_{i + 1}^{(n)} - a_i^{(n)} < \frac{1}{n\d}.
\]
That is, $\{a_i^{(n)} : i = 0, \dots, nC(\e)\}$ has a mesh size decreasing to zero. Consequently, for any $a \in [0, 1/K - \e]$ there is a decreasing sequence $a_{i_n}^{(n)}$ converging to $a$,  and
$\tau_f\big(a_{i_n}^{(n)}\big) \to \tau_f(a)$ by right continuity. This implies
\(
E_f \subset \overline{A},
\)
so $\overline{E}_f \subset \overline{A}$ and we have shown $\overline{E}_f = \overline{A}.$ Similarly $E_{x_f} \subset \overline{A}$.

To show \eqref{eq2:lemma:closure_times}, let $x \in \overline{E}_f$ and choose $a_{i_k}^{(n_k)} \in A_{n_k}$ such that $a_{i_k}^{(n_k)} \to x.$ We know
\[
L^{(n_k)}(a_{i_k}^{(n_k)}) = M^{x_f^{(n_k)}}(a_{i_k}^{(n_k)}) = -x_f^{(n_k)}(a_{i_k}^{(n_k)}).
\]
Taking limits as $k \to \infty$ on both sides and using the assumption that $x_f^{(n_k)}$ converges uniformly to $x_f$, we see
\[
L(x) = M^{x_f}(x) = -x_f(x).
\]
Equation \eqref{eq:shift:lemma:closure_times} follows from the convergence
of $x_f^{(n_k)}$ to $x_f$, Lemma \ref{lemma:lemma1} (vii), and the fact that
there is a sequence of times $t_{i_k}^{(n_k)}$ such that $t_{i_k}^{(n_k)} \to t^*$ with $t \in [t_{i_k}^{(n_k)}, t_{i_k + 1}^{(n_k)}).$
\end{proof}
The following result is classical and we state it as a lemma.
\begin{lemma}\label{lemma:right_continuous_classic}
Let $g \in C[0, T]$ be a nondecreasing function. Then 
\[
\tau_g(a) = \inf\{t > 0 : g(t) > a\}
\]
is the unique right continuous inverse function of $g$. That is, $\tau_g:[0, \max_x g(x)) \to \R$ is the unique right continuous map such that \(
g \circ \tau_g = id.
\)
\end{lemma}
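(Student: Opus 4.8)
The plan is to prove both the \emph{existence} half (that $\tau_g$ is itself a right-continuous right inverse) and the \emph{uniqueness} half. Throughout I write $M = \max_x g(x) = g(T)$ and, as in every application in this paper, take $g(0) = 0$ so that the domain $[0, M)$ is exactly the range of $g$; for each $a \in [0, M)$ the intermediate value theorem then makes the level set $\{t : g(t) = a\}$ nonempty, and since $g$ is continuous and nondecreasing this level set is a closed interval $[\alpha_a, \beta_a]$. The first structural fact I would record is the identification
\[
\tau_g(a) = \inf\{t > 0 : g(t) > a\} = \beta_a = \max\{t : g(t) = a\}.
\]
Indeed, for $t > \beta_a$ monotonicity and the definition of $\beta_a$ force $g(t) > a$, while for $t \le \beta_a$ we have $g(t) \le a$, so $\{t : g(t) > a\} = (\beta_a, T]$ and its infimum is $\beta_a$. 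In particular $g(\tau_g(a)) = g(\beta_a) = a$, which already gives $g \circ \tau_g = \mathrm{id}$.

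Next I would establish monotonicity and right-continuity of $\tau_g$. Monotonicity is immediate: if $a \le b$ then $\{t : g(t) > b\} \subseteq \{t : g(t) > a\}$, so $\tau_g(a) \le \tau_g(b)$. For right-continuity, fix $a$ and a sequence $a_n \downarrow a$; then $\tau_g(a_n)$ decreases to some $\ell \ge \tau_g(a)$. If $\ell > \tau_g(a)$, then since $g(\tau_g(a)) = a$ the point $\tau_g(a)$ does not lie in $\{t : g(t) > a\}$, so by definition of the infimum there is a $t \in (\tau_g(a), \ell)$ with $g(t) > a$; as $a_n \downarrow a$ we eventually have $a_n < g(t)$, whence $\tau_g(a_n) \le t < \ell$, contradicting $\tau_g(a_n) \ge \ell$. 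Thus $\ell = \tau_g(a)$ and $\tau_g$ is right-continuous.

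The uniqueness direction carries the real content, and I expect it to be the main obstacle. Let $h : [0, M) \to \R$ be any right-continuous function with $g \circ h = \mathrm{id}$. Since $g(h(a)) = a$, the value $h(a)$ lies in the level interval $[\alpha_a, \beta_a]$, so immediately $h(a) \le \beta_a = \tau_g(a)$. For the reverse inequality I would exploit right-continuity together with the behaviour of the \emph{left} endpoints of neighbouring level sets: choosing $a_n \downarrow a$ with $a_n > a$, we have $h(a_n) \ge \alpha_{a_n}$, and a short continuity argument shows $\alpha_{a_n} \downarrow \beta_a$ (any subsequential limit $c$ of $\alpha_{a_n}$ satisfies $g(c) = \lim_n a_n = a$, forcing $c \le \beta_a$, while $\alpha_{a_n} > \beta_a$ for all $n$ forces $c \ge \beta_a$). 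Passing to the limit and using right-continuity of $h$ at $a$ gives $h(a) = \lim_n h(a_n) \ge \lim_n \alpha_{a_n} = \beta_a = \tau_g(a)$. Combining the two inequalities yields $h \equiv \tau_g$.

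The one genuinely delicate feature throughout is that $g$ may be constant on intervals, so its level sets can be nondegenerate; this is exactly why a right inverse fails to be unique \emph{without} the right-continuity requirement, and why the uniqueness argument must pin $h$ to the right endpoint $\beta_a$ of each level interval rather than to an arbitrary interior point. The monotonicity of $\tau_g$ and the squeeze $\alpha_{a_n} \downarrow \beta_a$ are precisely the devices that convert the abstract right-continuity of $h$ into this endpoint selection.
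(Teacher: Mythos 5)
Your proof is correct and takes essentially the same route as the paper's: the identity $g\circ h=\mathrm{id}$ forces $h(a)$ into the level set of $a$ and hence $h\le\tau_g$, while right-continuity of the competitor applied to values just above $a$, combined with monotonicity of $g$, forces $h\ge\tau_g$. You additionally verify that $\tau_g$ itself is right-continuous (the existence half, which the paper leaves implicit), and your detour through the level-set endpoints $\alpha_{a_n}\downarrow\beta_a$ is a slightly longer but equivalent path to the paper's one-line contradiction $a+\varepsilon=g(r(a+\varepsilon))\le g(\tau_g(a))=a$.
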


We now prove uniqueness of the (subsequential) limits of $(x_f^{(n)}, L^{(n)}).$
\begin{theorem}\label{theorem1}
For any $\e > 0$, the sequence $(x_f^{(n)}, L^{(n)})$ converges uniformly on $[0, \tau_f(1/K - \e)]$ to a unique pair of continuous functions $(x_f, L).$ 
Where for $s, t \in [0, \tau_f(1/K - \e)]$, $s < t$,
\begin{align}\label{eq:time_shift_limit}
x_f(t) = x_f(s) + x_g(t - s)
\end{align}
for $g(x) = x_f(s) + L(s) + f(s + x) - f(s)$ under $\sigma_g(x) = \sigma(L(s) + x).$
\end{theorem}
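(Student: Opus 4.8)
The plan is to establish convergence of the \emph{entire} sequence $(x_f^{(n)})_n$ (which at once gives uniqueness of the limit) by proving that it is uniformly Cauchy, and then to deduce the flow identity \eqref{eq:time_shift_limit} by passing to the limit in the discrete shift relation of Lemma \ref{lemma:lemma1}(ix).

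\smallskip
\noindent\emph{Uniform Cauchy estimate.} Fix $m,n$ and let $\{x_i\}$ be the common partition appearing in Corollary \ref{cor:dist_x_n-x_m}, which gives
\[
|x_f^{(n)}(t) - x_f^{(m)}(t)| \leq \sum_{i}K\big(|x_f^{(n)}(x_i) - x_f^{(m)}(x_i)| + 2/(m\lor n)\big)\,|f(t\land x_{i+1}) - f(t\land x_i)|.
\]
The key point is that every partition point equals some $\tau_f(a)$ and is therefore a time at which $f$ attains a new running minimum, so $f(x_i) = -M^f(x_i)$ and consecutive increments satisfy $f(x_{i+1}) - f(x_i) = -(M^f(x_{i+1}) - M^f(x_i)) \leq 0$. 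Evaluating the estimate at a partition point $t = x_j$, the weights telescope exactly:
\[
\sum_i |f(x_j\land x_{i+1}) - f(x_j\land x_i)| = M^f(x_j) - M^f(0) \leq 1/K - \e .
\]
Writing $u(t) = |x_f^{(n)}(t) - x_f^{(m)}(t)|$ and $\eta = 2/(m\lor n)$, and noting that weights with $x_i \geq t$ vanish, this is a causal discrete Gronwall inequality whose kernel has total mass $\sum_i K|f(x_{i+1}) - f(x_i)| \leq 1 - K\e < 1$. Discrete Gronwall then yields $u(x_j) \leq C\eta$ at every partition point, uniformly in $j$; inserting this back into Corollary \ref{cor:dist_x_n-x_m} for an arbitrary $t$ (whose extra fractional weight is bounded by the oscillation of $f$) gives $\sup_t u(t) \leq C'(\e,K,f)/(m\lor n)$. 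Hence $(x_f^{(n)})_n$ is Cauchy in $C[0,\tau_f(1/K-\e)]$ and converges uniformly to a limit $x_f$; being the limit of a convergent sequence, it is unique and independent of any subsequence. Since $g\mapsto M^g$ is $1$-Lipschitz for the uniform norm and $L^{(n)} = M^{x_f^{(n)}}$, we also obtain $L^{(n)}\to M^{x_f}=:L$ uniformly.

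\smallskip
\noindent\emph{Flow identity.} Fix $s<t$ and choose $k_n$ with $s\in[t_{k_n}^{(n)}, t_{k_n+1}^{(n)})$. Lemma \ref{lemma:lemma1}(ix) reads
\[
x_f^{(n)}(t) - x_f^{(n)}(s) = x_{g_n}^{(n)}(t-s),
\]
with $g_n(x) = x_f^{(n)}(s) + L^{(n)}(s) + f(s+x) - f(s)$ and $x_{g_n}^{(n)}$ built from $\sigma_{g_n}(x) = \sigma(k_n/n + x)$. By Lemma \ref{lemma:lemma1}(vi), $k_n/n = \lfloor nL^{(n)}(s)\rfloor/n \to L(s)$, so $\sigma_{g_n}\to\sigma(L(s)+\cdot)=\sigma_g$ uniformly by the Lipschitz bound, while $g_n\to g$ uniformly because $g_n - g$ equals the \emph{constant} $(x_f^{(n)}(s)+L^{(n)}(s)) - (x_f(s)+L(s))$, which tends to $0$. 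The left side converges to $x_f(t)-x_f(s)$ by the first part, so it remains to identify $\lim_n x_{g_n}^{(n)}(t-s)$ with $x_g(t-s)$, where $x_g$ is the limit furnished by the first part applied to the driver $g$ and coefficient $\sigma_g$ (a shift of $\sigma$, hence with the same Lipschitz constant $K$ and lower bound $\d$, and with $t-s$ in its domain since the additional local time accumulated on $[s,t]$ is at most $1/K-\e$). Because $x_g^{(n)}(t-s)\to x_g(t-s)$, the identity reduces to the stability statement $x_{g_n}^{(n)} - x_g^{(n)} \to 0$ uniformly, at a fixed approximation level.

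\smallskip
I expect this stability step to be the main obstacle. To prove it I would apply the refinement representation of Proposition \ref{prop:x_f_refinement} to $x_{g_n}^{(n)}$ and $x_g^{(n)}$ on a common partition and estimate the difference termwise: the coefficient discrepancy is controlled by $\sup_x|\sigma_{g_n}(x)-\sigma_g(x)|\leq K|k_n/n - L(s)|$, the driver discrepancy by $2\|g_n-g\|_\infty$, and the remaining self-referential terms by the same monotone-increment telescoping and discrete Gronwall used above; one must also check that adding the vanishing constant to the driver perturbs its record-minimum times, and hence the partitions, negligibly. Letting $n\to\infty$ drives all three contributions to zero uniformly, yielding $x_{g_n}^{(n)}(t-s)\to x_g(t-s)$ and thus \eqref{eq:time_shift_limit}. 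Alternatively, one could verify directly that $r\mapsto x_f(s+r)-x_f(s)$ obeys the fixed-point relation on $\overline{E}_g$ and the shift formula off it from Lemma \ref{lemma:closure_times}, and then invoke uniqueness of the limit for the data $(g,\sigma_g)$.
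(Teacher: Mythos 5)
Your argument is correct in substance, but the route to convergence is genuinely different from the paper's. For the first (and main) claim, you prove the whole sequence is uniformly Cauchy by feeding Corollary \ref{cor:dist_x_n-x_m} into a discrete contraction at the partition points: since every $x_i$ is a record time of $M^f$, the weights telescope to $M^f(t)\leq 1/K-\e$, the kernel has mass $1-K\e<1$, and you get a quantitative bound $\sup_t|x_f^{(n)}(t)-x_f^{(m)}(t)|\leq C'(\e,K,f)\,\eta$. The paper instead extracts subsequential limits via Arzel\`a--Ascoli (Corollary \ref{cor:tightness_x_f_n}), passes to the limit in the same estimate, and runs the contraction $\|x_f-\wt{x}_f\|_{\overline{E}_f\cap[0,t]}\leq KM^f(t)\|x_f-\wt{x}_f\|_{\overline{E}_f\cap[0,t]}$ on the closure of the record times only; it then needs a second stage (identifying $M^{x_f}=M^{\wt{x}_f}$ via the right-continuous-inverse Lemma \ref{lemma:right_continuous_classic}, then re-invoking Proposition \ref{prop:x_f_refinement}) to extend equality off $\overline{E}_f$. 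Your version collapses these two stages into one, because at the discrete level the self-referential terms are only ever evaluated at partition points, and it dispenses with tightness and with Lemma \ref{lemma:closure_times} entirely while also yielding a rate. The underlying ingredients (Corollary \ref{cor:dist_x_n-x_m}, the identity $|f(x_{i+1})-f(x_i)|=M^f(x_{i+1})-M^f(x_i)$, and $KM^f<1$ on $[0,\tau_f(1/K-\e)]$) are the same. For the flow identity, you correctly isolate the stability statement $x_{g_n}^{(n)}-x_g^{(n)}\to 0$ as the real content; this is precisely the paper's Proposition \ref{prop:initial_conditions} (with $\zeta_n=k_n/n$, $z_n=x_f^{(n)}(s)+L^{(n)}(s)$), which the paper states and only outlines \emph{after} Theorem \ref{theorem1} and dismisses inside the proof with one sentence, so your sketch is at the same level of rigor as the source. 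Two minor cautions: the factor $2/(m\lor n)$ you carry from Corollary \ref{cor:dist_x_n-x_m} should really be $2/(m\land n)$ (the paper itself silently switches to $n_k\land m_k$ later), which changes nothing for the Cauchy property; and your parenthetical claim that $t-s$ automatically lies in the domain $[0,\tau_g(1/K-\e)]$ deserves more care, since $M^g$ is the running minimum of the shifted \emph{driver}, not the accumulated local time of $x_f$ --- this domain bookkeeping is exactly what the paper's Theorem \ref{theorem:convergence} patching argument is designed to handle.
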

\begin{proof}
\emph{Step 1:} Again, it suffices to show $x_f^{(n)}$ converges to a unique function $x_f$ because, in this case, $L^{(n)} := M^{x_f^{(n)}}$ will converge to $M^{x_f}.$
Let $n_k, m_k$ be two sequences such that
\begin{align}\label{eq3}
\begin{split}
&x_f^{(n_k)} \lra x_f, \text{ and }\\
&x_f^{(m_k)} \lra \wt{x}_f,
\end{split}
\end{align}
uniformly in $C[0, \tau_f(1/K - \e)].$ We will show $x_f = \wt{x}_f.$ 

\emph{Step 2:} To do 
this we first show $x_f(t) = \wt{x}_f(t)$ for all $t \in \overline{E}_f$, 
where $E_f$ is given in Lemma \ref{lemma:closure_times}. Assuming this fact,
note that $\tau_f(a) \in E_f$ for any
$a \in (0, 1/K - \e)$, \eqref{eq2:lemma:closure_times} shows $-x_f(\tau_{x_f}(a)) = -\tilde{x}_f(\tau_{x_f}(a))$. Then,
\[
M^{x_f}(\tau_{x_f}(a)) = -x_f(\tau_{x_f}(a)) = -\wt{x}_f(\tau_{x_f}(a)) = M^{\wt{x}_f}(\tau_{x_f}(a)).
\]
This means that $\tau_{x_f}$ is a right continuous inverse of $M^{\wt{x}_f}$. By Lemma \ref{lemma:right_continuous_classic}, $\tau_{x_f} = \tau_{\wt{x}_f}.$
In other words, $M^{x_f}$ and $M^{\wt{x}_f}$ are two continuous nondecreasing functions (with domain $[0, \tau_f(1/K - \e)]$) having the same right continuous inverse. Hence $M^{x_f} = M^{\wt{x}_f}$ on $[0, \tau_f(1/K - \e)].$ Then given any $t \in [0, \tau_f(1/K - \e)]$, let $t^* = \max\{s < t : s \in \ol{E}_f\}.$ From \eqref{eq:shift:lemma:closure_times} it follows that
\begin{align*}
x_f(t) &= x_f(t^*) + M^{x_f}(t^*)(f(t) - f(t^*))\\
&= \wt{x}_f(t^*) + M^{\wt{x}_f}(t^*)(f(t) - f(t^*))\\
&= \wt{x}_f(t),
\end{align*}
so that $x_f = \wt{x}_f$ on the entire interval.

\emph{Step 3:} Now we prove $x_f = \wt{x}_f$ on $\ol{E}_f.$
Denote $\{x_i\}$ as 
the partition of $[0, \tau_f(1/K - \e)]$ formed from $\big\{t_i^{(n_k)}\big\} \cup \big\{t_i^{(m_k)}\big\}.$ From Corollary \ref{cor:dist_x_n-x_m} and 
Lemma \ref{lemma:closure_times}, we have
\begin{align}\label{eq4}
\begin{split}
&|x_f^{(n_k)}(t) - x_f^{(m_k)}(t)|\\
&\leq \sum_{i = 0}^\infty K\Big(|L^{(n_k)}(x_i) - L^{(m_k)}(x_i)| + \frac{2}{n_k \lor m_k}\Big)\big|f(t \land x_{i + 1}) - f(t \land x_i)\big| \\
&\leq \sum_{i = 0}^\infty K\big(\|x_f^{(n_k)} - x_f^{(m_k)}\|_{\overline{E}_f \cap [0, t]} + \frac{2}{n_k \lor m_k}\big)\big|f(t \land x_{i + 1}) - f(t \land x_i)\big|, \text{ since $x_i \in E$,}\\
&\leq K\big(\|x_f^{(n_k)} - x_f^{(m_k)}\|_{\overline{E}_f \cap [0, t]} + \frac{2}{n_k \lor m_k}\big)\sum_{i = 0}^\infty \big(M^f(t \land x_{i + 1}) - M^f(t \land x_i)\big),
\end{split}
\end{align}
since $s_1, s_2 \in \ol{E}_f$ implies $|f(s_1) - f(s_2)| = |M^f(s_1) - M^f(s_2)|$ by Lemma \ref{lemma:closure_times}. Because $M^f$ is nondecreasing,
\[
\sum_{i = 0}^\infty \big(M^f(t \land x_{i + 1}) - M^f(t \land x_i)\big) = M^f(t).
\]
Hence,
\begin{align}\label{eq5}
\begin{split}
|x_f^{(n_k)}(t) - x_f^{(m_k)}(t)| &\leq K\Big(\|x_f^{(n_k)} - x_f^{(m_k)}\|_{\overline{E}_f \cap [0, t]} + 2/(n_k \lor m_k)\Big)M^f(t).
\end{split}
\end{align}
The right hand side of inequality \eqref{eq5} is nondecreasing, so the same upper bound holds when taking the supremum of the term on the left hand over $\overline{E}_f \cap [0, t]$:
\begin{align}\label{eq6}
\begin{split}
\big\|x_f^{(n_k)} - x_f^{(m_k)}\big\|_{\overline{E}_f \cap [0, t]} &:= \sup_{s \in \overline{E}_f \cap [0, t]}|x_f^{(n_k)}(t) - x_f^{(m_k)}(t)|\\
&\leq K\left(\big\|x_f^{(n_k)} - x_f^{(m_k)}\big\|_{\overline{E}_f \cap [0, t]} + \frac{2}{n_k \lor m_k}\right)M^f(t).
\end{split}
\end{align}
Now take $n_k, m_k \to \infty$ on both sides and apply \eqref{eq3} to see
\begin{align}\label{eq7}
\begin{split}
\big\|x_f - \wt{x}_f\big\|_{\overline{E}_f \cap [0, t]}
&\leq K\big\|x_f - \wt{x}_f\big\|_{\overline{E}_f \cap [0, t]}M^f(t).
\end{split}
\end{align}
But, in fact, $KM^f(t) < 1$ for $t \in [0, \tau_f(1/K - \e)]$ by definition
of $\tau_f.$ Consequently, unless $\|x_f - \wt{x}_f\|_{\overline{E}_f \cap [0, t]} = 0$
\[
\big\|x_f - \wt{x}_f\big\|_{\overline{E}_f \cap [0, t]} < \big\|x_f - \wt{x}_f\big\|_{\overline{E}_f \cap [0, t]}, \text{ for all $t \in \overline{E}_f$,}
\]
which is a contradiction. Hence $\|x_f - \wt{x}_f\|_{\overline{E}_f \cap [0, t]} = 0$, i.e.\ $x_f = \wt{x}_f$ on $\overline{E}_f.$ 
\end{proof}

This next proposition shows the map $f \mapsto x_f^{(n)}$ is 
asymptotically continuous with shifts of the initial condition and domain shifts
of $\sigma.$
\begin{prop}\label{prop:initial_conditions}
Let $\sigma_n(x) = \sigma(\zeta_n + x)$, where $\zeta_n,z_n \geq 0$ and $z_n \to z,\, \zeta_n \lra \zeta$. Let $x_{z + f}^\sigma$ be the unique function guaranteed by Theorem \ref{theorem1} under the function $\sigma$, and $x_{z_n + f}^{\sigma_n}$ be the unique function under $\sigma_n$. Then
\[
\big\|x_{z_n + f}^{\sigma_n} - x_{z + f}^\sigma\big\|_{[0, \tau_{z+f}(1/K - \e)]} \lra 0.
\]
\end{prop}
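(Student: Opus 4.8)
The plan is to run the two-phase comparison behind Theorem \ref{theorem1}, but with the fixed target replaced by a moving one. Write $u_n := x_{z_n+f}^{(n)}$ for the approximation built from the shifted noise $\sigma_n(\cdot)=\sigma(\zeta_n+\cdot)$ and the shifted input $z_n+f$, with $L^{(n)}=M^{u_n}$, and write $v^{(m)} := x_{z+f}^{(m)}$ for the approximation built from $\sigma_\zeta(\cdot):=\sigma(\zeta+\cdot)$ and input $z+f$, with $L^{(m)}=M^{v^{(m)}}$; by Theorem \ref{theorem1}, $v^{(m)}\to x_{z+f}$ uniformly and $L^{(m)}\to M^{x_{z+f}}$. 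The first observation is that adding a constant to $f$ leaves its increments unchanged, so in every instance of the refinement representation (Proposition \ref{prop:x_f_refinement}) the quantities $(z_n+f)(b)-(z_n+f)(a)$ and $(z+f)(b)-(z+f)(a)$ both reduce to $f(b)-f(a)$; the constants persist only in the initial values, contributing $u_n(0)-v^{(m)}(0)=z_n-z$. The second observation is that swapping $\sigma_n$ for $\sigma_\zeta$ inside any scaling factor costs at most $K|\zeta_n-\zeta|$, since $|\sigma(\zeta_n+y)-\sigma(\zeta+y')|\le K(|\zeta_n-\zeta|+|y-y'|)$ by the Lipschitz property.

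Applying Proposition \ref{prop:x_f_refinement} to $u_n$ and to $v^{(m)}$ over a common refinement $\{x_i\}$ of their partitions (each point of which has the form $\tau_f(\cdot)$ and hence lies in $E_f$) and subtracting, the derivation of Corollary \ref{cor:dist_x_n-x_m} and \eqref{eq4} goes through verbatim except for the two error terms just identified, giving
\[
\big|u_n(t)-v^{(m)}(t)\big|\le |z_n-z|+\sum_{i}K\Big(|\zeta_n-\zeta|+\big\|L^{(n)}-L^{(m)}\big\|_{[0,t]}+\tfrac{2}{n\land m}\Big)\big|f(t\land x_{i+1})-f(t\land x_i)\big|.
\]
Restricting $t$ to $\overline{E}_{z+f}$, the refinement points lie in $E_f$, so the $f$-increments telescope against $M^{z+f}$, and, exactly as in passing from \eqref{eq4} to \eqref{eq6}, the running-minimum differences on this set are controlled by the $u$--$v$ differences themselves (the analogue of $L=-x$ on $\overline{E}$). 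Taking the supremum over $\overline{E}_{z+f}\cap[0,t]$ produces the self-referential estimate
\[
\big\|u_n-v^{(m)}\big\|_{\overline{E}_{z+f}\cap[0,t]}\le |z_n-z|+K\Big(|\zeta_n-\zeta|+\big\|u_n-v^{(m)}\big\|_{\overline{E}_{z+f}\cap[0,t]}+\tfrac{2}{n\land m}\Big)M^{z+f}(t).
\]
Since $KM^{z+f}(t)\le 1-K\e<1$ on $[0,\tau_{z+f}(1/K-\e)]$, the self-referential term is absorbed into the left side; sending $m\to\infty$ (so $v^{(m)}\to x_{z+f}$ and $L^{(m)}\to M^{x_{z+f}}$) and then $n\to\infty$ drives $|z_n-z|$, $|\zeta_n-\zeta|$, and $2/(n\land m)$ to zero, yielding $u_n\to x_{z+f}$ on $\overline{E}_{z+f}$.

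It remains to upgrade convergence from $\overline{E}_{z+f}$ to the full interval, which is the second phase of Theorem \ref{theorem1}: reapplying Proposition \ref{prop:x_f_refinement} with an arbitrary $t$, one bounds $\|u_n-v^{(m)}\|_{[0,t]}$ by $3K\|f\|_{[0,t]}(\|L^{(n)}-L^{(m)}\|_{[0,t]}+2/(n\land m))$ together with the vanishing offsets $|z_n-z|$ and $K|\zeta_n-\zeta|\,\|f\|_{[0,t]}$, and the local-time differences have just been shown to vanish. I expect the main obstacle to be precisely the self-referential appearance of the running minima $L^{(n)}=M^{u_n}$ on the right-hand side, which the contraction $KM^{z+f}<1$ is tailored to defeat; a secondary technical nuisance is that the reflection sets $\overline{E}_{z_n+f}$ and $\overline{E}_{z+f}$ do not coincide---they correspond to the levels $\tau_f(\cdot+z_n)$ versus $\tau_f(\cdot+z)$, and the pre-reflection windows $[0,\tau_f(z_n))$ and $[0,\tau_f(z))$ differ---but both discrepancies are $O(|z_n-z|)$ and feed only into the vanishing error.
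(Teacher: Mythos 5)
Your proposal is correct and follows essentially the same route as the paper: both subtract the refinement representations of Proposition \ref{prop:x_f_refinement} over the common partition, pick up the error terms $|z_n-z|$ and $K|\zeta_n-\zeta|$ from the shifted initial value and the shifted $\sigma$, telescope the $f$-increments against $M^f$ on $\overline{E}_f$, absorb the self-referential term via $KM^f(t)<1$, and then upgrade from $\overline{E}_f$ to the full interval exactly as in Theorem \ref{theorem1}. The only cosmetic difference is that the paper extracts a uniformly convergent subsequence by tightness and compares $x_{z_n+f}^{(n)}$ with $x_{z+f}^{(n)}$ at the same index, whereas you compare directly with $x_{z+f}^{(m)}$ and send $m\to\infty$ first; both devices serve the same purpose.
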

\begin{proof}
Proposition \ref{prop:prop_x_f_bound} can be modified to show that
the collection $x_{z_n + f}^{\sigma_n}$, constructed with $\sigma_n$, is tight in
$C[0, \tau_f(1/K - \e)]$ as in Corollary \ref{cor:tightness_x_f_n}. Take
a subsequence $n_k$ such that $x_{z_{n_k} + f}^{\sigma_{n_k}}$ converges to a function $\wt{x}$ uniformly. It suffices to show $x_f = \wt{x}.$
Since $z_n + f$ is a shift of $z + f$, $M^{z_n + f} = \big[M^{z + f} - (z_n - z)\big] \lor 0$ so $t_{i, z_n + f}^{(n)}$ are hitting times for $M^f$ by Lemma \ref{lemma:lemma1} (iii). One can show the results of Lemma \ref{lemma:closure_times} hold with 
the times $t_{i, z_{n_k} + f}^{(n_k)}$ and the set $\overline{E}_f.$ Recall that
\begin{align}\label{prop:eq1}
|f(x_{i + 1}) - f(x_i)| = M^f(x_{i + 1}) - M^f(x_i)
\end{align}
for $x_i, x_{i+1} \in \ol{E}_f.$ We incorporate the functions $\sigma$ and $\sigma_n$ in the notation of the functions $x_f^{(n)}$ given in \eqref{def_x_f_n} by letting $(x_{z_n + f}^{(n), \sigma}, L^{(n), \sigma})$ denote $(x_g^{(n)}, L^{(n)})$ with $g = z_n + f$ under $\sigma$.
We know $x_{z_n + f}^{(n), \sigma_n}$ has a form similar to that given in \eqref{eq:x_f_refinement} with the partition of $[0, \tau_f(1/K - \e)]$ given by $\{x_i\} = \{t_{i, z_n + f}^{(n), \sigma_n}\} \cup \{t_{i, z + f}^{(n), \sigma_n}\}$. That is,
\[
x_{z_n + f}^{(n), \sigma_n}(t) - (z_n + f(0)) = \sum_{i = 0}^{\infty}\sigma_n\left(\frac{\lfloor nL^{(n)}_{z_n + f}(x_i)\rfloor}{n}\right)\big(f(t \land x_{i + 1}) - f(t \land x_i)\big).
\]
We have a similar representation for $x_{z + f}^{(n), \sigma}(t) - (z + f(0)).$ By subtracting these two representations and \eqref{prop:eq1}, we have
\begin{align*}
&|x_{n_k + f}^{(n_k), \sigma_{n_k}}(t) - x_{z +f}^{(n_k), \sigma}(t)|\\
&\leq |z_{n_k} - z| + \sum_{i = 0}^{\infty}\Big|\sigma\Big(\zeta_{n_k} + \frac{\lfloor nL^{(n_k), \sigma_{n_k}}_{z_{n_k} + f}(x_i)\rfloor}{n_k}\Big) - \sigma\Big(\zeta + \frac{\lfloor nL^{(n_k), \sigma}_{z_{n_k} + f}(x_i)\rfloor}{n_k}\Big)\Big|\big|f(t \land x_{i + 1}) - f(t \land x_i)\big|\\
&\leq |z_{n_k} - z| + \sum_{i = 0}^{\infty}\Big|\sigma\Big(\zeta_n + \frac{\lfloor nL^{(n_k), \sigma_{n_k}}_{z_{n_k} + f}(x_i)\rfloor}{n_k}\Big) - \sigma\Big(\zeta + \frac{\lfloor nL^{(n_k), \sigma}_{z_{n_k} + f}(x_i)\rfloor}{n_k}\Big)\Big|\big|f(t \land x_{i + 1}) - f(t \land x_i)\big|\\
&\leq |z_{n_k} - z| + \sum_{i = 0}^{\infty}K\Big|\zeta_n + \frac{\lfloor nL^{(n_k), \sigma_{n_k}}_{z_{n_k} + f}(x_i)\rfloor}{n_k} - \zeta - \frac{\lfloor nL^{(n_k), \sigma}_{z_{n_k} + f}(x_i)\rfloor}{n_k}\Big|\big|f(t \land x_{i + 1}) - f(t \land x_i)\big|\\
&\leq |z_{n_k} - z| + K\sum_{i = 0}^\infty\Big(|\zeta_n - \zeta| + \|L_{z_{n_k} + f}^{(n),\sigma_{n_k}} - L_{z + f}^{(n), \sigma}\|_{\overline{E}_f \cap [0, t]} + \frac{2}{n_k}\Big)\big|f(t \land x_{i + 1}) - f(t \land x_i)\big|.
\end{align*}
One can then apply the same arguments used in the proof of Theorem \ref{theorem1} by first showing $x_f = \wt{x}$ on $\overline{E}_f$, then the entire interval $[0, \tau_f(1/K - \e)].$
\end{proof}
Thus far we have constructed $(x_f, L)$ by showing $(x_f^{(n)}, L^{(n)})$ converges uniformly on $[0, \tau_f(1/K - \e)]$ to a unique function. We extend this construction by showing 
that for each $T < \infty$, $(x_f^{(n)}, L^{(n)})$ converges uniformly on $[0, T]$ to a unique pair $(x_f, L)$. This extends
Theorem \ref{theorem1} to hold for $C[0, T]$ for any positive $T.$
\begin{theorem}\label{theorem:convergence}
For any $T > 0$, $(x_f^{(n)}, L^{(n)})$ converges uniformly to a unique $(x_f, L) \in C([0, T], \R^2)$. Furthermore,
\begin{enumerate}[label = (\roman*)]
\item For $t \in [0, T]$, let $t' = \sup\{0 \leq s \leq t: x_f(s) + L(s) = 0\}$ be the last zero of $x_f + L$ before $t.$ Then
\[
x_f(s) = x_f(t') + \sigma\big(L(t')\big)\big(f(s) - f(t')\big),
\]
for all $s \in [t', t].$
\item For all $0 \leq s \leq t \leq T,$
\[
x_f(t) = x_f(s) + x_g(t - s)
\]
where $g(x) = x_f(s) + L(s) + f(s + x) - f(s)$ under $\sigma_g(x) = \sigma(L(s) + x).$
\end{enumerate} 
\end{theorem}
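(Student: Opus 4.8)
The plan is to deduce the global statement from Theorem \ref{theorem1} by restarting the construction at a finite sequence of times: the time–shift identity lets me advance from one block to the next, and the finiteness of the total local time accumulated on $[0,T]$ guarantees that only finitely many restarts are needed. If $T \le \tau_f(1/K - \e)$ the claim is immediate from Theorem \ref{theorem1} by restriction, so I may assume $\tau_f(1/K - \e) < T$ and argue inductively over blocks.

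Set $s_0 = 0$ and $s_1 = \tau_f(1/K - \e)$. Suppose the limit $(x_f, L)$ has been identified on $[0, s_k]$ and coincides there with $\lim_n (x_f^{(n)}, L^{(n)})$. I restart every approximation at the common time $s_k$: by Lemma \ref{lemma:lemma1} (ix),
\[
x_f^{(n)}(s_k + r) = x_f^{(n)}(s_k) + x_{g^{(n)}}^{(n)}(r), \qquad g^{(n)}(r) = x_f^{(n)}(s_k) + L^{(n)}(s_k) + f(s_k + r) - f(s_k),
\]
where $x_{g^{(n)}}^{(n)}$ is built from the shifted noise $\sigma(\lfloor n L^{(n)}(s_k)\rfloor/n + \cdot)$. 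By the inductive hypothesis $x_f^{(n)}(s_k) \to x_f(s_k)$ and $L^{(n)}(s_k) \to L(s_k)$, so the initial shift $z_n = x_f^{(n)}(s_k) + L^{(n)}(s_k)$ and the domain shift $\zeta_n = \lfloor n L^{(n)}(s_k)\rfloor/n$ converge to $z = x_f(s_k) + L(s_k)$ and $\zeta = L(s_k)$. Applying Proposition \ref{prop:initial_conditions} to the continuous function $r \mapsto f(s_k + r) - f(s_k)$ then gives uniform convergence of $x_{g^{(n)}}^{(n)}$ to $x_g$ on $[0, \tau_g(1/K - \e)]$, where $g(r) = x_f(s_k) + L(s_k) + f(s_k + r) - f(s_k)$ and $\sigma_g(r) = \sigma(L(s_k) + r)$. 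Hence $x_f^{(n)}$ converges uniformly on the block $[s_k, s_{k+1}]$ with $s_{k+1} = (s_k + \tau_g(1/K - \e)) \land T$, completing the inductive step.

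It remains to see that finitely many blocks exhaust $[0, T]$. The continuum form of Lemma \ref{lemma:lemma1} (iii) gives $M^f(t) = \int_0^{L(t)} \md u/\sigma(u) =: \Phi(L(t))$; since $\sigma(u) \le \sigma(0) + Ku$, the map $\Phi$ is a continuous increasing bijection of $[0, \infty)$, so $L(T) = \Phi^{-1}(M^f(T)) < \infty$. On every block that is not terminal, $M^g$ attains the value $1/K - \e$, and because this means the shifted local time $b$ satisfies $\int_0^b \md u/\sigma(L(s_k) + u) = 1/K - \e$, the quantity $\Phi(L(\cdot))$ increases across the block by exactly $1/K - \e$; as the total increase over $[0,T]$ equals $M^f(T) < \infty$, at most $M^f(T)/(1/K - \e)$ such blocks occur. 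The final block is the one on which $M^g$ stays below $1/K - \e$ throughout $[0, T - s_k]$, and since the estimates underlying Theorem \ref{theorem1} use only the bound $K M^g < 1$, valid on this finite interval, they deliver convergence there as well. Concatenating the finitely many uniform limits yields $(x_f^{(n)}, L^{(n)}) \to (x_f, L)$ uniformly on $[0, T]$. With convergence in hand, passing Lemma \ref{lemma:lemma1} (ix) to the limit gives (ii) for all $0 \le s \le t \le T$, exactly as \eqref{eq:time_shift_limit} was obtained in Theorem \ref{theorem1}, and (i) is the special case of (ii) on a maximal interval where $L$ is constant, i.e. \eqref{eq:shift:lemma:closure_times} read between successive zeros of $x_f + L$.

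I expect the principal obstacle to be the restart step: one must verify that restarting each $x_f^{(n)}$ at the common, limit–defined time $s_k$ produces shifted data $(z_n, \zeta_n)$ converging to the data of the limit block $g$, so that Proposition \ref{prop:initial_conditions}—rather than a mere subsequential compactness argument—forces the \emph{entire} sequence to converge on the next block. The finiteness of the number of restarts, each full block consuming a fixed increment $1/K - \e$ of the finite budget $\Phi(L(T)) = M^f(T)$, is precisely what prevents the restart times $s_k$ from accumulating strictly before $T$.
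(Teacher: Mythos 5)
Your proposal is correct and takes essentially the same route as the paper: the paper also patches together finitely many blocks by restarting at $t' = \tau_f(1/K-\e)$ via Lemma \ref{lemma:lemma1} (ix), invokes Proposition \ref{prop:initial_conditions} with $\zeta_n = L^{(n)}(t')$, $z_n = x_f^{(n)}(t') + L^{(n)}(t')$ to get convergence of the restarted approximations, and notes that $N$ blocks cover $[0,\tau_f(N/K - N\e)] \supset [0,T]$ for $N$ large since $M^f(T) < \infty$. Your bookkeeping of the block count via the budget $\Phi(L(T)) = M^f(T)$ is just a rephrasing of the paper's choice of $N$.
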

\begin{proof}
We use a ``patching'' argument with the shifting property Lemma \ref{lemma:lemma1} (ix) to show $x_f^{(n)}$ converges uniformly on larger intervals.

Set $\tau = \tau_f(1/K - \e)$ and let $x_f$ be the limit of $x_f^{(n)}$ on $[0, \tau]$.
By Lemma \ref{lemma:lemma1} (ix), we write
\begin{align}\label{eq1:theorem:convergence}
x_f^{(n)}(t + \tau) = x_f^{(n)}(\tau) + x_{g^{(n)}}^{(n)}(t)
\end{align}
 for $t \in [0, \tau_g(1/K - \e)]$ for $g^{(n)}(x) = x_f^{(n)}(\tau) + L^{(n)}(\tau) + f(\tau + x) - f(\tau)$, $\sigma_{g^{(n)}}^{(n)}(x) = \sigma(L^{(n)}(\tau) + x),$ $g(x) = x_f(\tau) + L(\tau) + f(\tau + x) - f(\tau)$, $\sigma_g(x) = \sigma(L(\tau) + x)$, and $\e \in (0, 1/K).$ Applying Proposition \ref{prop:initial_conditions} with $\zeta_n = L^{(n)}(\tau), z_n = x_f^{(n)}(\tau) + L^{(n)}, z = x_f(\tau) + L(\tau)$ shows
\[
x_{g^{(n)}}^{(n)} \lra x_g,
\]
uniformly on $[0, \tau_g(1/K - \e)]$. Consequently, we can apply Theorem \ref{theorem1} to $x_f^{(n)}$ and $x_{g^{(n)}}^{(n)}$ with equation \eqref{eq1:theorem:convergence}. Because both 
terms on the right hand side converge uniformly, the left hand side also converges uniformly. Thus,
$x_f^{(n)}$ converges uniformly to a continuous function $x_f$ on $[0, \tau_f(1/K - \e) + \tau_g(1/K - \e)] = [0, \tau_f(2/K - 2\e)].$ Furthermore,
\[
x_f^{(n)}(t + \tau) \lra x_f(\tau) + x_g(t),
\]
for $t \in [0, \tau_g(1/K - \e)].$

We can repeat this argument to see $x_f^{(n)}$ converges uniformly on $[0, \tau_f(N/K - N\e)]$
for any $N \in \N.$ Since $f$ is continuous it is bounded on compact time sets. Take $N$ 
large enough so that for any $T > 0$. Then $x_f^{(n)}$ converges uniformly on $[0, \tau_f(N/K - N\e)] \supset [0, T].$
\end{proof}


\section{Diffusions with local time dependent noise}\label{sec:existence_of_sde}
In this section we show that the map $f \mapsto (x_f, L)$ constructed in Section 2, when applied path-wise to a Brownian motion to yield the process $(x_B, L)$, is a construction of the process $(Y, L)$ in Theorem \ref{intro:th:sde_sigma_p}.
\begin{prop}\label{theorem:strong_existence_sec3}
Let $\sigma : \R_{\geq 0} \to \R_+$ be a Lipschitz function bounded away from zero, $(\Omega, \prob, (\mc{F}_t)_{t \geq 0})$ be a probability space supporting an $\mc{F}_t$-adapted Brownian motion $B,$ and take $X(0) \geq 0.$ There exists a continuous $\mc{F}_t$-adapted Markov process $(X, L)$ such that the following holds almost surely, for all $t \in [0, T]$:
\begin{align}
&X(t) = X(0) + \int_0^t\sigma(L(s)) \, \mr{d}B(s),\\
&\text{$L(0) = 0$, $L$ is nondecreasing and flat off $\{s: X(s) + L(s) = 0\}$}.
\end{align}Furthermore, 
\begin{align}\label{eq:quad_variation}
\langle X \rangle(t) = \int_0^t\sigma^2(L(s)) \, \mr{d}s.
\end{align}
\end{prop}
The proof of Theorem \ref{theorem:strong_existence_sec3} essentially follows from the results in Section \ref{sec:integ_constr}.
\begin{proof}
Almost every Brownian path is continuous, hence we can construct the pair
$(x^{(n)}_B, L^{(n)})$ by replacing $f$ in equation \eqref{def_x_f_n} pathwise
with the Brownian motion $B$. From Lemma \ref{lemma:lemma1} (vii), $x_B^{(n)}$
is the stochastic integral with respect to Brownian motion:
\[
x_B^{(n)} = x_B^{(n)}(0) + \int_0^t\sigma\Big(\frac{\lfloor nL^{(n)}(s)\rfloor}{n}\Big) \, \mr{d}B(s)
\]
where $L^{(n)}(t) := M^{x_B^{(n)}}(t).$ By Theorem \ref{theorem:convergence},
$(x_B^{(n)}, L^{(n)})$ converges in $C([0, T], \R^2)$ to the pair of processes $(x_B, L)$, almost surely.
This, along with Lemma \ref{lemma:lemma1} (x), implies $(x_B, L)$ is an $\mc{F}_t-$adapted Markov process.
Note that $L(t) = M^{x_B}(t)$ is an $\mc{F}_t$-adapted continuous nondecreasing process on $[0, T].$
Hence 
\[
\prob\left(\int_0^T\sigma^2(L(s)) \, \mr{d}s \leq T\sigma^2(L(T)) < \infty\right) = 1,
\]
and consequently we can define the stochastic integral
$\ds\int_0^t\sigma(L(s))\, \mr{d}B(s)$ \cite[Ch. 3.2]{KaratzasShreve}. Clearly
\begin{align*}
&\int_0^T\Big|\sigma\Big(\frac{\lfloor n L^{(n)}(s)\rfloor}{n}\Big) - \sigma(L(s))\Big|^2 \, \mr{d}\langle B\rangle(s)= \int_0^T\Big|\sigma\Big(\frac{\lfloor n L^{(n)}(s)\rfloor}{n}\Big) - \sigma(L(s))\Big|^2 \, \mr{d}s\\
&\leq T\Big\|\sigma\Big(\frac{\lfloor n L^{(n)}\rfloor}{n}\Big) - \sigma\circ L \Big\|_{[0, T]}^2\leq TK^2\Big\|\frac{\lfloor n L^{(n)}\rfloor}{n} - L\Big\|_{[0, T]}^2\\
&\leq  TK^2\Big(\|L^{(n)} - L\|_{[0, T]} + \frac{1}{n} \Big)^2\\
&\lra 0.
\end{align*}
Then, by classical results on convergence of stochastic integrals (\cite[Prop. 3.2.26]{KaratzasShreve}, for instance),
\begin{align*}
&\Big\|\int_0^t \sigma(L(s)) \, \mr{d}B(s) - \int_0^t\sigma\Big(\frac{\lfloor nL^{(n)}(s)\rfloor}{n}\Big) \, \mr{d}B(s) \Big\|_{[0, T]}\\
&= \Big\|X_B(0) + \int_0^t \sigma(L(s)) \, \mr{d}B(s) -  x_B^{(n)}\Big\|_{[0, T]}\\
&\lra 0, \text{ in probability.}
\end{align*}
But we already know 
\[
x_B^{(n)} \lra x_B, \text{ almost surely.}
\]
Therefore 
\[
x_B(t) = x_B(0) + \int_0^t\sigma(L(s)) \, \mr{d}B(s)
\]
for all $t \in [0, T]$, almost surely, where $L = M^{x_B}.$ Because $x_B$ can be constructed for any probability
space supporting a Brownian motion, and is adapted to the same filtration, this proves a strong solution exists.
\end{proof}

\begin{theorem} \label{th:reflected_diffusion}
\begin{align}\label{eq:reflected_SDE}
Y(t) := X(t) + L(t) = X(0)  + L(t) + \int_0^t\sigma(L(s)) \, \mr{d}B(s)
\end{align}
is a process reflecting inside $[0, \infty)$ with local time dependent noise. That is, $L$ is the local time of $Y$ at zero. Furthermore, there exists a continuous
random field $\Lambda(t, a): \R_{\geq 0} \times \R \to \R$ such that
\begin{enumerate}[label = (\roman*)]
\item $\Lambda(t, a)$ is $\mc{F}_t-$adapted for fixed $a.$
\item The map $t \mapsto \Lambda(t, a)$ is continuous and nondecreasing for each $a,$ with $\Lambda(0, a) = 0$
and $\Lambda(\cdot, a)$ flat off $\{s : Y(s) = a\}$.
\item For every Borel measurable $k: \R \to \R_{\geq 0}$,
\[
\int_0^tk(Y(s))\sigma^2(L(s)) \, \mr{d}s = 2\int_\R k(a)\Lambda(t, a)\, \mr{d}a
\]
almost surely, for all $t \in [0, T]$.
\end{enumerate}
We also have the It\^o-Tanaka formula for (linear combinations of) convex functions $f$:
\begin{align}\label{eq:gen_ito_rule}
f(Y(t)) = f(Y(0)) + \int_0^tf_-(Y(s))\sigma(L(s))\, \mr{d}B(s) + \int_\R\Lambda(t, a)\, \mr{d} \mu(a),
\end{align}
for all $t \in [0, T],$ almost surely.
Furthermore, $L(t)$ is the local time of $Y$ at zero $\Lambda(t, 0)$, almost surely, for all $t \in [0, T]$. Here $f_-$ is the left-hand derivative, which exists Lebesgue almost everywhere, and $\mu$ is the measure constructed from $\mu([a, b)) = f_-(b) - f_-(a)$.
\end{theorem}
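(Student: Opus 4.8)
The plan is to treat $Y = X + L$ as a continuous nonnegative semimartingale and read off the entire statement from the classical theory of semimartingale local time, the single nonstandard point being the identification of the Skorohod regulator $L$ with the boundary local time $\Lambda(\cdot,0)$.

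First I would record the structural facts already in hand. By Theorem \ref{theorem:strong_existence_sec3} the martingale part of $Y$ is $X$, with $\langle Y\rangle(t) = \langle X\rangle(t) = \int_0^t \sigma^2(L(s))\,\md s$, while $L$ is a continuous nondecreasing (hence finite-variation) process flat off $\{s : Y(s)=0\}$; together with $Y = X + L \ge 0$ from Remark \ref{remark:martingale} and the Skorohod lemma this says $Y$ reflects in $[0,\infty)$. Applying the classical construction of local time for continuous semimartingales \cite[Ch.~3.7]{KaratzasShreve} to $Y$ then furnishes a family $L^a_t(Y)$, and I would set $\Lambda(t,a) := \tfrac12 L^a_t(Y)$. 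Adaptedness (i), continuity and monotonicity in $t$ with flatness off $\{Y=a\}$ and $\Lambda(0,a)=0$ (ii), and the occupation formula (iii) — which is exactly $\int_0^t k(Y(s))\,\md\langle Y\rangle(s) = \int_\R k(a)L^a_t(Y)\,\md a$ rewritten via $\md\langle Y\rangle = \sigma^2(L)\,\md s$ and $\Lambda = \tfrac12 L^a$ — are then immediate. The only level charged by the finite-variation measure $\md L$ is $a=0$, so the field is jointly continuous off $a=0$, and its value there is pinned down by the next step.

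The central step is to prove $\Lambda(t,0) = L(t)$, which is also the assertion that $L$ is the local time of $Y$ at zero. Applying Tanaka's formula to $Y$ at level $0$ and using $Y \ge 0$ gives $Y(t) = Y(0) + \int_0^t \mathbf{1}_{\{Y(s)>0\}}\,\md Y(s) + \Lambda(t,0)$. Splitting $\md Y = \sigma(L)\,\md B + \md L$ and using that $\md L$ is carried by $\{Y=0\}$ annihilates the $\md L$ contribution, so $\int_0^t \mathbf{1}_{\{Y(s)>0\}}\,\md Y(s) = \int_0^t \mathbf{1}_{\{Y(s)>0\}}\sigma(L(s))\,\md B(s)$. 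Comparing with the defining identity $Y(t) = Y(0) + \int_0^t \sigma(L(s))\,\md B(s) + L(t)$ isolates the residual $\Lambda(t,0) - L(t) = \int_0^t \mathbf{1}_{\{Y(s)=0\}}\sigma(L(s))\,\md B(s)$, a martingale whose quadratic variation is $\int_0^t \mathbf{1}_{\{Y(s)=0\}}\sigma^2(L(s))\,\md s$. By (iii) with $k = \mathbf{1}_{\{0\}}$ this equals $2\int_{\{0\}}\Lambda(t,a)\,\md a = 0$, so the residual vanishes identically and $\Lambda(t,0) = L(t)$ for all $t$, almost surely. I expect this to be the main obstacle, both because it is where the Skorohod local time and the occupation-density local time must be matched and because it requires care with the factor-of-two normalization and with the jump of $a \mapsto L^a_t$ at the single charged level $a=0$.

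Finally, the It\^o--Tanaka formula \eqref{eq:gen_ito_rule} follows by applying the classical It\^o--Tanaka formula for convex $f$ to the semimartingale $Y$, namely $f(Y(t)) = f(Y(0)) + \int_0^t f_-(Y(s))\,\md Y(s) + \int_\R \Lambda(t,a)\,\md\mu(a)$ with $\mu$ the measure associated with $f$, and then substituting $\md Y = \sigma(L)\,\md B + \md L$; the boundary integral $\int_0^t f_-(Y(s))\,\md L(s)$ reduces to $f_-(0)L(t)$ by flatness of $L$ off $\{Y=0\}$, which recovers the fuller form recorded in Theorem \ref{intro:th:reflected_diffusion}. The passage from convex $f$ to linear combinations is by linearity of both sides, and all identities hold for $t\in[0,T]$ almost surely after combining the countably many exceptional null sets.
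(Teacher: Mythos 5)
Your proposal is correct and follows essentially the same route as the paper: both obtain $\Lambda$ from the classical semimartingale local time of $Y$ (so (i)--(iii) are immediate from \cite[Ch.~3.7]{KaratzasShreve}), both identify $L$ with $\Lambda(\cdot,0)$ by applying Tanaka's formula at level $0$ to the nonnegative process $Y$, and both kill the residual $\int_0^t \mathds{1}_{\{Y(s)=0\}}\sigma(L(s))\,\mr{d}B(s)$ using the occupation formula (iii) with $k=\mathds{1}_{\{0\}}$ (the paper phrases this as $L^2$-equivalence of the indicators $\mathds{1}_{\{Y>0\}}$ and $\mathds{1}_{\{Y\ge 0\}}$, you phrase it as vanishing quadratic variation of the residual martingale --- the same computation). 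No gaps.
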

\begin{remark}
See \cite[Chap 3.6D]{KaratzasShreve}, 
where Karatzas \& Shreve refer to \eqref{eq:gen_ito_rule} as the generalized It\^{o} rule for 
convex functions. The
random field $\Lambda$ is the stochastic field of local times where $\Lambda(t, a)$ is the local time of $Y$
at level set $a$ and time $t$. We use the same normalization of local time as Karatzas \& Shreve \cite[Remark 3.6.4 and Ch. 3.7]{KaratzasShreve}.
\end{remark}
\begin{proof}
The statements (i), (ii), and (iii) follow from \cite[Ch 3.7]{KaratzasShreve} since $X$ is a continuous martingale. That $L$ is the local time of $Y$ at zero, apply \eqref{eq:gen_ito_rule} to $f(x) = x^+ := \max\{x,0\}$, so that $f_-(x) = \mathds{1}_{(0, \infty)}$ and $\mu = \d_{\{0\}}.$ We have
\begin{align}\label{eq1:th:reflected_diffusion}
(Y(t))^+ &= (Y(0))^+ + \int_0^t\mathds{1}(Y(s) > 0)\sigma(L(s)) \, \mr{d}B(s) + \Lambda(t, 0)
\end{align}
for all $t \in [0, T],$ almost surely,
because the measure $\mu$ becomes a point mass at zero. Taking $k(x) = \mathds{1}_{\{0\}}$ in Theorem \ref{th:reflected_diffusion} (iii), we see 
\[
\int_0^t\mathds{1}(Y(s) = 0)\, \mr{d}\langle X\rangle(s) = 0,
\]
for all $t \in [0, T],$ almost surely. This implies that $\mathds{1}(Y(s) > 0)$ is equivalent to $\mathds{1}(Y(s) \geq 0)$ under the $L^2$ norm on $\Omega \times [0, T]$ generated by the measure $m(A) = \ex \int_0^t \mathds{1}_A(t, \omega)\,\mr{d}\langle X\rangle_s(\omega).$ Consequently the stochastic integrals 
\[
\int_0^t\mathds{1}(Y(s) > 0)\sigma(L(s)) \, \mr{d}B(s),
\] and 
\[
\int_0^t\mathds{1}(Y(s) \geq 0)\sigma(L(s)) \, \mr{d}B(s) = \int_0^t\sigma(L(s)) \, \mr{d}B(s) = X(t) - X(0)
\] are indistinguishable. That is, they agree almost surely, for all $t \in [0, T]$ \cite[Ch. 3.2]{KaratzasShreve}. Since $(Y(0))^+ = X(0) > 0$ we write \eqref{eq1:th:reflected_diffusion} as
\begin{align}\label{eq2:th:reflected_diffusion}
(Y(t))^+ &= (Y(0))^+ + X(t) - X(0) + \Lambda(t, 0)\\
&= X(t) + \Lambda(t, 0),
\end{align}
almost surely, for all $t \in [0, T]$.
Now $(Y(t))^+ := \max\{Y(t), 0\}= Y(t)$ since $Y \geq 0$, so
\[
Y(t) = X(t) + \Lambda(t, 0),
\]
almost surely, for all $t \in [0, T]$.
But by definition,
\[
Y(t) := X(t) + L(t).
\]
Hence
$L(t)$ and $\Lambda(t, 0)$ are indistinguishable. Consequently $L$ is the martingale local time of $Y$ at zero, justifying our claim that $Y$ is a reflected diffusion with local time dependent noise.
\end{proof}

\begin{remark}\label{remark:existence_tau_f}
By examining the proof of Theorem \ref{theorem:convergence}, $x_f^{(n)}$ converges to $x_f$ uniformly on $[0, \tau_f(C)]$ for any finite $C > 0.$ This holds even when $\tau_f(C) = \infty$ since $x_f(t) = x_f(t^*) + \sigma(L(t^*))(f(t) - f(t^*))$ for all $t > t^* = \inf_{s \in \R}\{M^f(s) = \max_{x \in \R} M^f(x)\}$. This does not apply for Brownian paths, however, since $\tau_B(C) < \infty$ almost surely.
\end{remark}

\subsection{Time of determinacy}
In this subsection we give the proof of Theorem \ref{intro:th:sde_sigma_p} after introducing  various lemmas. We consider the pair $(Y, L)$ when the noise of $Y$  
is a function $\sigma \geq 0$ with support $[0, \sigma_0]$ and locally Lipschitz on $[0, \sigma_0).$ Such a function is not bounded away from zero, so existence of
$(Y, L)$ does not immediately follow from Theorems \ref{theorem:strong_existence_sec3} and \ref{th:reflected_diffusion}. However,
\[
 \sigma_{\d}(x) =
   \begin{dcases}
     \sigma(x) &\text{ for $x \in [0, \sigma_0-\d]$}\\
     \sigma(\sigma_0 - \d) &\text{ for $x > \sigma_0 - \d$},
   \end{dcases}
\]
does satisfy these properties for each $\d > 0,$ and we can build up a strong solution by letting $\d$ decrease to zero. It is conceivable that the noise will disappear completely if the noise decreases to zero at the random time the local time reaches $\sigma_0,$ the edge of the support of $\sigma.$ After this
time we may continuously extend the process to be zero. That is, it will be deterministically zero from this time forward. Because of this we call this time the \emph{time of determinacy} and denote it by $\tau_{\sigma_0}$. Formally,
\[
\tau_{\sigma_0} = \inf\{t > 0 : \sigma(L(t)) = 0\} = \inf\{t > 0: L(t) \geq \sigma_0\}.
\]
It is evident that $\tau_{\sigma_0}$ is a hitting time of $L$. We characterize the distribution of hitting times of $L$ in the statement of Theorem \ref{intro:th:sde_sigma_p}.
\noindent
In order to stay true to our notation of $\tau_f$ defined earlier, the next proposition is phrased using this notation.

\begin{lemma}\label{lemma:cont_hitting_time}
Let $a_n$ be a deterministic sequence of non-negative real numbers converging to $a \in \R_{\geq 0}$. Let
\[
\tau_B(a_n) = \inf\{t > 0: B(t) < -a_n\}
\] be the first time the running minimum of a standard Brownian motion exceeds $a_n.$ Then $\tau_B(a_n) \to \tau_B(a)$, almost surely.
\end{lemma}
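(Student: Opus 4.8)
The plan is to establish the almost-sure convergence $\tau_B(a_n) \to \tau_B(a)$ by exploiting the monotonicity and continuity of the running-minimum process together with the well-known continuity properties of Brownian first-passage times. Recall $\tau_B(a) = \inf\{t > 0 : B(t) < -a\}$ is the first passage time of $B$ to the level $-a$, and that $a \mapsto \tau_B(a)$ is nondecreasing and almost surely right-continuous (it is, after all, the right-continuous inverse of the running minimum $M^B(t) = \max_{s \le t}(-B(s)) \lor 0$, by Lemma \ref{lemma:right_continuous_classic} applied pathwise to the nondecreasing function $M^B$). The key structural fact I would invoke is that for standard Brownian motion the running minimum is almost surely \emph{strictly} increasing at each of its record times in the sense that the level $-a$ is not a point of constancy: more precisely, for each fixed $a$, almost surely $\tau_B$ is continuous at $a$, because a Brownian path does not have an interval on which it stays above a newly attained minimum before strictly descending further.

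First I would split into the two one-sided cases. For a sequence $a_n \uparrow a$ (or more generally the part of the sequence approaching from below), right-continuity of $\tau_B$ is not what is needed; instead I would use \emph{left}-limits. Set $\tau_B(a^-) = \lim_{b \uparrow a}\tau_B(b)$, which exists by monotonicity. Since $M^B$ is continuous with $M^B(\tau_B(b)) = b$, taking $b \uparrow a$ and using continuity of $M^B$ gives $M^B(\tau_B(a^-)) = a$, so $\tau_B(a^-) \ge \tau_B(a)$; combined with $\tau_B(a^-) \le \tau_B(a)$ from monotonicity this forces $\tau_B(a^-) = \tau_B(a)$, i.e.\ $\tau_B$ is automatically left-continuous at every level once we know $M^B$ does not remain constant at value $a$ on a nondegenerate interval ending at $\tau_B(a)$. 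For a sequence $a_n \downarrow a$, right-continuity of $\tau_B$ (again from Lemma \ref{lemma:right_continuous_classic}, as $\tau_B$ is the right-continuous inverse on its domain) directly yields $\tau_B(a_n) \to \tau_B(a)$. Treating a general sequence is then just combining the monotone subsequences approaching from each side.

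The main obstacle, and the only place a genuine probabilistic input is required, is ruling out the pathological event that $M^B$ is constant on a left-neighborhood of $\tau_B(a)$ at the value $a$, which is exactly the event that $\tau_B$ jumps at $a$ (a jump from the right would correspond to an interval of constancy of $M^B$ at level $a$ to the right of $\tau_B(a)$, a flat spot of the running minimum). I would argue this has probability zero for each fixed deterministic $a$ using the strong Markov property at the stopping time $\tau_B(a)$: at time $\tau_B(a)$ the post-minimum process $B(\tau_B(a) + \cdot) - B(\tau_B(a)) = B(\tau_B(a)+\cdot) + a$ is, by the strong Markov property, a fresh Brownian motion started at $0$, and a Brownian motion started at $0$ immediately takes strictly negative values (its infimum over $[0,\epsilon]$ is strictly negative for every $\epsilon > 0$, almost surely). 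Hence for every $\epsilon>0$ the running minimum strictly exceeds $a$ by time $\tau_B(a)+\epsilon$, so there is no flat spot to the right and no jump: $\tau_B(a_n)\to\tau_B(a)$ for $a_n\downarrow a$ as well. Because $a$ is deterministic the exceptional null set depends only on $a$, and the statement is proved almost surely. I would remark that the nonnegativity constraint $a \ge 0$ and the convention $\tau_B(0)=0$ are handled by the same argument, with the degenerate case $a=0$ following since $B$ becomes negative immediately after time $0$.
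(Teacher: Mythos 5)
Your overall strategy is the right one and matches the paper's in spirit: right-continuity of $\tau_B$ is a deterministic fact about right-continuous inverses and disposes of $a_n \downarrow a$, while the only probabilistic input needed is that $M^B$ has no flat spot at the fixed level $a$, which handles $a_n \uparrow a$. But the step that is supposed to supply that probabilistic input fails as written. You apply the strong Markov property at $\tau_B(a) = \inf\{t : B(t) < -a\}$ and conclude that the running minimum strictly exceeds $a$ immediately after $\tau_B(a)$. That statement is true but vacuous --- it holds by the very definition of $\tau_B(a)$ as the first time $M^B$ exceeds $a$ --- and it concerns the path \emph{after} $\tau_B(a)$, i.e.\ to the right of the would-be flat spot. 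The discontinuity you need to exclude is a flat spot of $M^B$ at level $a$ lying to the \emph{left} of $\tau_B(a)$, namely the interval $[\tau_B(a^-), \tau_B(a)]$ where $\tau_B(a^-) = \inf\{t : B(t) = -a\}$ is the first hitting time of $-a$; no information about the path after $\tau_B(a)$ can rule that out. Your closing sentence (``so there is no flat spot to the right and no jump: $\tau_B(a_n) \to \tau_B(a)$ for $a_n \downarrow a$ as well'') shows the two one-sided cases have been swapped: the downward case was already settled by right-continuity in your first paragraph, and the upward case is the one left hanging on the unproved no-flat-spot claim.

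The repair is short: apply the strong Markov property at the first hitting time $H_a := \inf\{t : B(t) = -a\} = \tau_B(a^-)$, a genuine stopping time with $B(H_a) = -a$. The post-$H_a$ process $B(H_a + \cdot) + a$ is a Brownian motion started at $0$ and takes strictly negative values in every interval $(0, \epsilon)$, so $\tau_B(a) \le H_a + \epsilon$ for every $\epsilon > 0$, whence $\tau_B(a) = H_a = \tau_B(a^-)$ and left-continuity at the fixed level $a$ holds almost surely. This is essentially what the paper does, except that it applies the strong Markov property at $\tau^- := \liminf_n \tau_B(a_n)$ after first checking $B(\tau^-) = -a$; your version via the fixed-level hitting time is arguably cleaner (it avoids having to know that $\liminf$ of stopping times is a stopping time under the usual conditions), but it must be anchored at the start of the potential flat spot, not at its end.
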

\begin{proof}
Let 
\begin{align*}
\tau^+ &:= \limsup_{n \to \infty}\tau_B(a_n),\\
\tau^- &:= \liminf_{n \to \infty}\tau_B(a_n).
\end{align*}
Note $\tau^+$ and $\tau^-$ are stopping times, and $\tau^+ \leq \tau_B(\max_n a_n) < \infty$, almost surely. For each $\omega \in \Omega$ there is a random sequence $n_k(\omega)$ such that
$\tau_{B_\omega}(a_{n_k(\omega)}) \lra \tau^-(\omega).$ By continuity
\[
B(\tau^-) = \lim_{k \to \infty} B(\tau_B(a_{n_k(\omega)})) = \lim_{k \to \infty}-a_{n_k(\omega)} = -a.
\]
By the strong Markov property $\{B(t + \tau^-) + a : t \geq 0\}$ is a standard Brownian motion, so 
for every $\e > 0$ there is an $s \in (\tau^-, \tau^- + \e)$ such that $B(s + \tau^-) + a < 0$ (i.e. $B(s + \tau^-) < -a$), almost surely. This implies $\tau_B(a) \leq \tau^-$, almost surely. 

Pick $\e > 0$ and choose $N$ such
that $a_n < a + \e$ for all $n > N.$ Then $\tau_B(a_n) < \tau_B(a + \e)$, almost surely, for all $n > N.$ Consequently
\[
\tau^+ = \limsup_{n \to \infty}\tau_B(a_n) \leq \tau_B(a + \e),
\]
almost surely. Recall that $z \mapsto \tau_B(z)$ is right continuous, almost surely. For a fixed $\eta > 0$ one can find $\e_\eta$ such that $\tau^+ \leq \tau_B(a + \e_\eta) < \tau_B(a) + \eta$ with probability greater than $1 - \eta.$ By letting $\eta \to 0$, we see $\tau^+ \leq \tau_B(a)$, almost surely. Hence
\[
\tau^+ \leq \tau_B(a) \leq \tau^-
\]
almost surely, so $\tau_B(a_n) \lra \tau_B(a)$ almost surely, as well.
\end{proof}
\begin{lemma}\label{lemma:time_change}
Let $\sigma$ satisfy the conditions in Theorem \ref{theorem:strong_existence_sec3}, choose $\alpha \in [0, \sigma_0)$ and $C >0$ such that $\int_0^\alpha \frac{\md s}{\sigma(s)} < C.$ By Remark \ref{remark:existence_tau_f} we can define the process $(Y, L)$ on $[0, \tau_B(C)]$. Then
\[
L\left(\tau_{X(0) + B}\left(\int_0^\alpha \frac{\mr{d}s}{\sigma(s)}\right)\right) = \alpha.
\]
\end{lemma}
\begin{remark}
This can be thought of as a time change formula for the local time.
\end{remark}
\begin{proof}
According to Lemma \ref{lemma:lemma1} (iii), $L^{(n)}(t_{\lfloor \alpha n\rfloor}^{(n)}) = \lfloor \alpha n\rfloor/n,$ 
where 
\[
t_{\lfloor \alpha n\rfloor}^{(n)} = \tau_{X(0) + B}\left(\sum_{j = 0}^{\lfloor \alpha n\rfloor}\frac{1}{\sigma(j/n)n}\right).
\]
Note that 
\[
\sum_{j = 0}^{\lfloor \alpha n\rfloor}\frac{1}{\sigma(j/n)n} \lra  \int_0^\alpha \frac{\mr{d}s}{\sigma(s)}
\]
since the sum is a Riemann approximation converging to the integral.
By Lemma \ref{lemma:cont_hitting_time},
\[
t_{\lfloor \alpha n\rfloor}^{(n)} \lra \tau_{X(0) + B}\left(\int_0^\alpha\frac{\mr{d}s}{\sigma(s)} \right) =: t(\alpha),
\]
almost surely. Since $L^{(n)}$ converges to $L$ uniformly on $[0, \tau_{X(0) + B}(C)] \supset [0, t(\alpha)]$, almost surely, and
\[
L(t(\alpha)) = \lim_{n\to \infty}L^{(n)}(t_{\lfloor \alpha n\rfloor}^{(n)}) = \lim_{n \to \infty}\lfloor \alpha n\rfloor/n = \alpha,
\] almost surely.
\end{proof}

\begin{proof}[Proof of Theorem \ref{intro:th:sde_sigma_p}]
\emph{Step 1:} We show strong existence holds. 
We assume $\sigma_0 < \infty$, in the case $\sigma_0 = \infty$ one replaces $\sigma_0 - \d$ below with $N_\d$, where $N_\d \to \infty$ as $\d \to 0.$
Set
\[
S(\alpha) := \int_0^\alpha\frac{\mr{d}s}{\sigma(s)}
\]
for $\alpha \in [0, \sigma_0).$
Pick $\d > 0$, we can 
construct a solution $\{(Y(t), L(t)) : t \in [0, \tau_B(S(\sigma_0 - \d))]\}$ where
\[
Y(t) = Y(0) + L(t) + \int_0^t\sigma_\d(L(s))\, \md B(t).
\]
By Lemma \ref{lemma:time_change},
\[
L(\tau_{Y(0) + B}(S(\sigma_0 - \d))) = \sigma_0 - \d,
\]
so $L(s)$ ranges from zero to $\sigma_0 - \d$ on $[0, \tau_{Y(0) + B}(S(\sigma_0 - \d))].$
This implies that $\sigma_{\d}\circ L = \sigma$ on $[0, \tau_{Y(0) + B}(S(\sigma_0 - \d))]$. So
\begin{align}\label{eq1:proof_SDE_decrease}
Y(t) = Y(0) + L(t) + \int_0^t\sigma(L(s))\, \md B(t)
\end{align}
for $t \in [0, \tau_{Y(0) + B}(S(\sigma_0 - \d))].$ This pathwise construction can be done for any $\d > 0$.
Take the sequence $\d_n = 1/n$ and for a given sample path of Brownian motion define
\begin{align}\label{eq2:proof_SDE_decrease}
Y_n(t) = x(0) + L(t)+ x_B(t)
\end{align}
where $(x_B, L)$ is constructed according to Theorem \ref{theorem:convergence} for \(t \in [0, \tau_{Y(0) + B}\) \((S(\sigma_0 - 1/n))]\). For $n < m$ the paths $Y_n$ and $Y_m$ are identical,
almost surely, on their common domain $[0, \tau_{Y(0) + B}(\sigma_0 - 1/n)]$. By taking $n \to \infty$ a 
strong solution $(Y, L)$ of \eqref{eq2:proof_SDE_decrease} can be defined on \([0, \tau_{Y(0) + B}(S(\sigma_0-)))\), where 
\begin{align}\label{eq3:proof_SDE_decrease}
S(\sigma_0-) := \lim_{\d \to 0}S(\sigma_0 - \d) = \int_0^{\sigma_0}\frac{\md s}{\sigma(s)},
\end{align}
by setting $Y(t) = \lim_{n \to \infty}Y_n(t)$ for $t \in [0, \tau_{Y(0) + B}(S(\sigma_0-)))$. This shows a strong
solution to \eqref{eq1:proof_SDE_decrease} holds for $t \in [0, \infty)$ when $\int_0^{\sigma_0}\frac{\md s}{\sigma(s)} = \infty.$ Furthermore, Lemma \ref{lemma:time_change} yields
\[
L(\tau_{Y(0) + B}(S(\sigma_0 - 1/n))) = \sigma_0 - 1/n,
\]
so the noise of $Y$ at time  $\tau_{Y(0) + B}(S(\sigma_0 - 1/n))$ is
\[
\sigma(L(\tau_{Y(0) + B}(S(\sigma_0 - 1/n)))) = \sigma(\sigma_0 - 1/n).
\]
Consequently, the noise of $Y(t)$ approaches zero as $t$ approaches $\tau_{Y(0) + B}(\int_0^{\sigma_0}\frac{\md s}{\sigma(s)})$. Setting $Y(t) = 0$ for $t \in [\tau, \infty)$, $Y$ solves \eqref{eq1:proof_SDE_decrease} for all $t \in [0, \infty)$ as long as we 
can sensibly define $L$ for these times. One option would be to freeze $L(t)$ at $\sigma_0$ after time $\tau$, but then $L$ would not retain the notion of local time because the 
local time of a constant function (at that level) is infinite. It seems more natural to define 
$L(t) = \infty$ for $t \in (\tau, \infty)$, meaning $L$ would jump to infinity. With these definitions, $(Y, L)$ solves \eqref{eq1:proof_SDE_decrease} by extending $\sigma(\infty) = 0$. 

\emph{Step 2:} We characterize the Laplace transform for the hitting time of $L$. For any $a \in [0, \sigma_0]$, the argument in the previous step shows
\[
L(\tau_{Y(0) + B}(S(a-))) = a.
\]
That is,
\[
\tau_a := \inf\{t > 0 : L(t) \geq a\} = \tau_{Y(0) + B}S(a-),
\]
almost surely. This characterizes the hitting times of $L.$ The Laplace transform of $Y(0) + B$
is well known \cite{KaratzasShreve}. Since $Y(0)$ is independent of the Brownian path, using the strong Markov property we have
$\tau_{Y(0) + B}(a) \dist \tau_W(Y(0)) + \tau_B(a)$ where $W$ and $B$ are two independent standard Brownian motions independent of $Y(0)$. The Laplace transform is then computed
\begin{align*}
\phi(\ld) &= \ex(\exp{-\ld \tau_a})\\
&= \ex \exp\left\{-\ld(\tau_W(Y(0)) + \tau_B(S(a-))\right\}\\
&= \ds \ex \exp\left\{-\sqrt{2\ld}\, Y(0)\right\}\ex \exp\left\{-\sqrt{2\ld}\ds\int_0^a\frac{\md s}{\sigma(s)}\right\}
\end{align*}


\end{proof}

\section{Propagation of chaos for systems of diffusions}\label{sec:systems}
In this section we characterize the macroscopic behavior for systems of 
reflected diffusions interacting through the local time of the processes in the system in the prescribed manner.
For a given $n$, let
\((\Omega, \prob,\) \((\mc{F}_t)_{t \geq 0})\) be a 
probability space supporting $n$ independent $\mc{F}_t-$adapted 
Brownian motions $B_i,$ for  $i =1, \dots, n.$ For any Lipschitz
function $\sigma: [0, \infty) \to [0, \infty),$ we consider continuous
$\mc{F}_t$-adapted processes $(X_i^{(n)}, L_i^{(n)})$ such that \eqref{eq:system_intro} holds for all $t \in [0, T]$, almost surely.

We assume that $X_i^{(n)}(0)$ are i.i.d.\ and independent of the Brownian motions. In the convergence theorems we also assume there exist i.i.d.\ $X_i(0)$ such that \(W_1(X_i^{(n)}(0)\) \(, X_i(0))\to 0\). Here $W_1$ is the Wasserstein-1 metric. Recall that convergence in this metric is equivalent to the existence of a probability space supporting $X_i^{(n)}(0), X_i(0)$ such that $X_i^{(n)}(0)$ converges almost surely and in $L_1$ to $X_i(0).$

\begin{theorem}(Strong existence and uniqueness)\label{th:system_existence}
Let $\sigma:[0, \infty) \to (0, \infty)$ be Lipschitz, and $(\Omega, \prob, (\mc{F}_t)_{t \geq 0})$ a probability space supporting $n$ i.i.d.\
$\mc{F}_t-$Brownian motions. There exist continuous $\mc{F}_t-$adapted processes $(X_1^{(n)}, \dots, X_n^{(n)}, L_1^{(n)}, \dots, L_n^{(n)})$ satisfying \eqref{eq:system_intro} for all $t \in [0, T]$, almost surely.
\end{theorem}
\begin{remark}\label{remark:system_existence}
The existence and uniqueness can be shown from Dupuis and Ishii's work as mentioned in Remark \ref{intro:remark}. Similar to the $n = 1$ case, Skorohod's lemma shows that $L_i^{(n)}$ is the running minimum of
$X_i^{(n)}(0) + \int_0^t\sigma(\mb{L}^{(n)}(s))\,\mr{d}B_i(s)$ below zero. Also note that the system is 
clearly exchangeable.
\end{remark}
\begin{lemma}\label{lemma:L_2bound}
For every  $T < \infty,$ \[
\sup_n\ex(\mb{L}^{(n)}(T))^2 < 4\sigma(0)(\sigma(0) + K)Te^{4(K^2 + \sigma(0))T}.\]
\end{lemma}
\begin{proof}
By definition, 
\begin{align}
\begin{split}
[\mb{L}^{(n)}(T)]^2 = \Big(\frac{1}{n}\sum_{i = 1}^nL_i^{(n)}\Big)^2
&= \frac{1}{n^2}\left(\sum_{i = 1}^nL_i^2(T) + \sum_{i \neq j}L_i(T)L_j(T) \right).
\end{split}
\end{align}
Exchangeability implies $\ex (L_i(T))^2 = \ex (L_1(T))^2$ and $\ex L_i(T)L_j(T) = \ex L_1(T)L_2(T)$ for $i \neq j.$ Taking expectations,
\begin{align}
\begin{split}
\ex(\mb{L}^{(n)}(T))^2 &\leq \frac{1}{n^2}\left(n\ex (L_1^{(n)}(T))^2 + n(n-1)\ex (L_1(T)L_2(T)) \right), \text{ by Cauchy-Schwarz},\\
&\leq \frac{1}{n^2}\Big(n\ex (L_1^{(n)}(T))^2 + n(n-1)\ex (L_1^{(n)}(T))^2 \Big)\\
&= \ex (L_1^{(n)}(T))^2\\
&\leq \ex \sup_{s \in [0, T]}\left|\int_0^s\sigma(\mb{L}^{(n)}(z))\, \mr{d}B_i\right|^2.
\end{split}
\end{align}
Doob's maximal inequality yields
\begin{align}\label{lemma_sup_L_2:eq1}
\begin{split}
&\ex \sup_{s \in [0, T]}\left|\int_0^s\sigma(\mb{L}^{(n)}(z))\, \mr{d}B_i\right|^2\leq 4\ex\left(\int_0^T\sigma(\mb{L}^{(n)}(z))\, \mr{d}B_i(z)\right)^2\\
&= 4\ex \int_0^T\sigma^2(\mb{L}^{(n)}(s))\, \mr{d}s\leq 4\ex \int_0^T(\sigma(0) + K\mb{L}^{(n)}(s))^2\, \mr{d}s\\
&\leq 4\ex \int_0^T \sigma(0)(\sigma(0) + K) + (K^2 + \sigma(0))(\L^{(n)}(s))^2\, \md s\\ 
&\leq 4\sigma(0)(\sigma(0) + K)T + 4(K^2 + \sigma(0))\int_0^T\ex(\mb{L}^{(n)}(s))^2\, \mr{d}s.
\end{split}
\end{align}
If we denote $\alpha_n(t) = \ex (\mb{L}^{(n)}(t))^2$, \eqref{lemma_sup_L_2:eq1} implies
\[
\alpha_n(t) \leq 4\sigma(0)(\sigma(0) + K)t + 4(K^2 + \sigma(0))\int_0^t\alpha_n(s)\, \mr{d}s.
\]
Then by Gr\"onwall's inequality,
\[
\ex (\mb{L}^{(n)}(t))^2 = \alpha_n(t) \leq 4\sigma(0)(\sigma(0) + K)te^{4(K^2 + \sigma(0))t}.
\]
\end{proof}
\noindent
Recall that
\[
\omega(f, \delta, T) := \sup_{\substack{|t - s| < \d \\ s, t \in [0, T]}}|f(t) - f(s)|
\]
is the modulus of continuity of $f$ on $[0, T]$ depending on $\d.$

\begin{lemma}[See \cite{FischerNappo}]\label{lemma:Mean_mod_cont}
For every $p > 0$ there exists a constant $C_p$ independent of $\d, T$, such that
\[
\ex (\omega(B, \d, T))^p < C_p \left(\d\log\frac{T}{\d} \right)^{p/2}.
\]
\end{lemma}
\begin{lemma}\label{lemma:tightness}
The collection of processes $\{\L^{(n)} : n \in \N\}$ is tight in $C[0, T].$
\end{lemma}
\begin{proof}
Let $\theta_n(t) = \int_0^t\sigma^2\big(\L^{(n)}(s)\big)\, \mr{d}s$, so by a time change there are standard Brownian motions $\wt{B}_i$ such that
\begin{align}\label{eq:time_change}
\wt{B}_i\circ \theta_n \dist \int_0^\cdot \sigma\big( \L^{(n)}(s)\big)\, \mr{d}B_i(s) =: Y_i^{(n)}(\cdot).
\end{align}
Notice that
\[
\omega(\L^{(n)}, \d, T) = \omega\Big(\frac{1}{n}\sum_{i = 1}^nL_i^{(n)}, \d, T \Big)
\leq \frac{1}{n}\sum_{i = 1}^n\omega(L_i^{(n)}, \d, T).
\]
According to the Skorohod Lemma, 
\begin{align*}
L_i^{(n)}(t) &:= \min_{0 \leq s \leq t}-\Big(X_i^{(n)}(0) + \int_0^s\sigma\big(\L^{(n)}(z)\big)\, \mr{d}B_i(z) \Big) \lor 0\\
&= \min_{0 \leq s \leq t}-\Big(X_i^{(n)}(0) + Y_i^{(n)}(s)\Big) \lor 0,
\end{align*}
so that
\[
\omega(L_i^{(n)}, T, \d) \leq \omega(Y_i^{(n)}, T, \d),
\]
almost surely.
Also
\begin{align*}
|\theta_n(t) - \theta_n(s)| &= \left|\int_s^t \sigma^2(\L^{(n)}(z))\,\mr{d}z\right|\\
&< 4[\sigma^2(0) + K\sigma(0) + (K^2 + 1 + \sigma(0))(\L^{(n)}(T))^2]|t - s|
\end{align*}
almost surely. Consequently,
\[
\omega(\wt{B}_i\circ \theta_n, \d, T) < \omega(\wt{B}_i, (\alpha + \beta (\L^{(n)}(T))^2)\d, (\alpha + \beta (\L^{(n)}(T))^2)T)
\]
almost everywhere, where $\alpha = 4(\sigma^2(0) + K\sigma(0)), \beta = 4(K^2 + 1 + \sigma(0))$.
For any fixed $M, \e, \d > 0$, Chebyshev's inequality gives
\begin{align}\label{lemma_tightness:eq1}
\begin{split}
&\prob(\omega(\L^{(n)}, \d, T) > \e) \\
&\leq \prob\left(\frac{1}{n}\sum_{i = 1}^n\omega(B_i\circ \theta_n, \d, T) > \e, \ \L^{(n)}(T) < M \right) + \prob(\L^{(n)}(T) > M)\\
&\leq \prob\left(\frac{1}{n}\sum_{i = 1}^n\omega(B_i, (\alpha + \beta M^2)\d, (\alpha + \beta M^2)T) > \e\right) + M^{-1}\ex\, (\L^{(n)}(T))\\
&\leq \e^{-1}\ex \, \omega(B_1, (\alpha + \beta M^2)\d, (\alpha + \beta M^2)T) + M^{-1}\ex\, (\L^{(n)}(T)), \text{ apply Lemma \ref{lemma:Mean_mod_cont}},\\
&\leq \e^{-1}C\left((\alpha + \beta M^2)\d \log\frac{T}{\d}\right)^{1/2} + M^{-1}\ex\, (\L^{(n)}(T))\\
&\leq \e^{-1}C\left((\alpha + \beta M^2)\d \log\frac{T}{\d}\right)^{1/2} + M^{-1}\sup_n\sqrt{\ex \, (\L^{(n)}(T))^2}.
\end{split}
\end{align}
Taking $\lim_{\d \to 0}\sup_{n}$ of both sides, and using Lemma \ref{lemma:L_2bound}, we see
\[
\lim_{\d \to 0}\sup_n \prob(\omega(\L^{(n)}, \d, T) > \e) = 0
\]
for any $\e, T > 0.$ Since $\L^{(n)}(0) = 0$ almost surely, this is sufficient for tightness.
\end{proof}
\noindent
Recall Definition \ref{defn:tau_f}.
We denote $\tau_{\frac{1}{n}\sum_{i = 1}^nM^{X_i^{(n)} + B_i}}(a)$ as $\tau_{\ol{M}_n}(a)$ for convenience. The Lemma below is an analog of Lemma \ref{lemma:time_change}, and the proof is similar.
\begin{lemma}\label{lemma:hitting_time} For each $a > 0$,
\[
\L^{(n)}\left(\tau_{\ol{M}_n}\left(\int_0^a\frac{\mr{d}s}{\sigma(s)}\right)\right) = a,
\]
almost surely.
\end{lemma}

We use the following classical lemma whose proof remains for the reader.
\begin{lemma}\label{lemma:hitting_time_for_limit}
Let $f_n$ be a sequence in $C[0, T]$ converging uniformly to a function $f$. If $f$ is strictly increasing, then for any $a \in [f(0), f(T))$, 
\[
\tau_{f_n}(a) \lra \tau_f(a).
\]
\end{lemma}
\qed

\begin{prop}
\label{prop:deter_limit}
$\L^{(n)}(\cdot)$ converges in distribution to a deterministic continuous function $\ell(\cdot)$
such that
\[
\ell \left(\tau_{\ex M^{B + X_i(0)}(t)}\left(\int_0^a\frac{\md s}{\sigma(s)} \right)\right) = a.
\]
In particular, when $X_i(0) = 0$, $\ex M^B(t) = \sqrt{2t/\pi}$ and therefore
\[
\ell \left(\frac{\pi}{2}\left(\int_0^a\frac{\mr{d}}{\sigma(s)}\right)^2\right) = a.
\]
\end{prop}
\begin{proof}
By tightness of $\L^{(n)}$, let $\L^{(n_k)}$ be some sequence of processes converging in distribution to a process $L$. By the SLLN,
\[
\frac{1}{n}\sum_{i = 1}^nM^{B_i + X_i^{(n)}(0)}(t) \lra \ex M^{B_1 + X_1(0)}(t)
\]
almost surely on $C[0, T]$. Since $\ex M^{B_1 + X_1(0)}(t)$ is strictly increasing, Lemma \ref{lemma:hitting_time_for_limit} implies
\[
\tau_{\ol{M}_n}(b) \to \tau_{\ex M^{B_1 + X_1(0)}}(b)
\]  almost surely, for each $b \in \left[0, \int_0^A\frac{\mr{d}s}{\sigma(s)}\right)$ where $A$ is chosen so $\int_0^A\frac{\mr{d}s}{\sigma(s)} = \ex M^{B + X_i(0)}(T).$ From Lemma \ref{lemma:hitting_time} and the convergence of $\L^{(n_k)}$
\begin{align}\label{eq1:deter_limit}
L\left(\tau_{\ex M^{B + X_i(0)}(\cdot)}\left(\int_0^a\frac{\md s}{\sigma(s)} \right)\right)
\dist \lim_{n_k \to \infty}\L^{(n_k)}\left(\tau_{\ol{M}_n}\left(\int_0^a\frac{\md s}{\sigma(s)} \right)\right) = a.
\end{align}

for each $a \in [0, A).$ Consequently, 
 \[
L\left(\tau_{\ex M^{B + X_i(0)}(\cdot)}\left(\int_0^a\frac{\md s}{\sigma(s)} \right)\right) = a,
\]
for all rational values in $[0, A)$, almost surely. Almost every path of $L$ is continuous and nondecreasing. Hence, almost every path has a unique nondecreasing and right continuous right inverse determined from its values on a dense set. By \eqref{eq1:deter_limit}, this 
right inverse is deterministic and not dependent on the subsequence $n_k$. Therefore $L$ is deterministic and does not depend on the subsequence $n_k$. When $X_1(0) = 0$, we know $\ex M^B(t) = \sqrt{2t/\pi}$, so the right inverse is in fact equal to 
\[
a \mapsto \frac{\pi}{2}\left(\int_0^a\frac{\mr{d}}{\sigma(s)}\right)^2.
\]
\end{proof}

We now prove Theorem \ref{progagation_chaos}.
\begin{proof}[Proof of Theorem \ref{progagation_chaos}]
As in \eqref{eq:time_change}, $X_i^{(n)}$ can be written as a time change of a reflected Brownian motion $Z_i:$
\[
X_i^{(n)} \dist Z_i \circ \theta_n, \, \theta_n(t) = \int_0^t\sigma^2\big(\L^{(n)}(s) \big)\md s.
\] By Proposition \ref{prop:deter_limit}
this time change $\theta_n(\cdot)$ converges in distribution to a deterministic function $\int_0^\cdot \sigma^2(\ell(s))\md s.$ Consequently
any finite collection of particles $(X_1^{(n)}, \dots, X_k)$ will converge to 
$(\wt{X}_1, \dots, \wt{X}_k)$ given by
\begin{align}\label{eq:time_change_limit}
&\wt{X}_i(\cdot) = X_i(0) + Z_i \circ \int_0^\cdot \sigma^2(\ell(s)) \md s,\\
&L_i(t) = \text{ local time at zero of $\wt{X}_i$ at time $t.$}
\end{align}
where the $Z_i$ are independent reflected Brownian motions and $X_i(0)$ is the distributional limit of $X_i^{(n)}(0)$. Therefore $(X_1^{(n)}, \dots, X_k)$ converges to a collection of independent processes. This demonstrates propagation of chaos for the system.

To show the remaining claims, we use a standard localization argument by first assuming 
$\sigma$ to be bounded.
The convergence assumption on $X_i^{(n)}$ imply the existence of a probability space so $X_i^{(n)}(0) \lra X_i(0)$ almost surely and in $L_1$.
By Proposition \ref{prop:deter_limit}, $\L^{(n)}$ converges in probability to 
$\ell \in C[0, T].$ Consequently we have a subsequence $n_m$ such that
$\L^{(n_m)} \lra \ell$, almost surely. From boundedness of $\sigma$ it follows that
$\|\sigma\circ \L^{(n_m)} - \sigma\circ \ell\|_{[0, T]} \to 0$ almost surely and in $L_2.$ We have
\begin{align*}
&\|X_i^{(n)} - \wt{X}_i^{(n)}\|_{[0, T]}\\
 &= \sup_{t \in [0, T]}\left|L_i^{(n)}(t) + X_i^{(n)}(0) + \int_0^t\sigma(\L^{(n)}(s))\, \md B_i(s)
 - \wt{L}_i(t) - X_i(0) - \int_0^t\sigma(\ell(s))\,\md B_i(s)\right|\\
&\leq |X_i^{(n)}(0) - X_i(0)| + \|L_i^{(n)} - \wt{L}_i\|_{[0, T]} + \sup_{t \in [0, T]}\left|\int_0^t\sigma(\L^{(n)}(s)) - \sigma(\ell(s))\,\md B_i(s)\right|\\
&\leq |X_i^{(n)}(0) - X_i(0)| + 2\sup_{t \in [0, T]}\left|\int_0^t\sigma(\L^{(n)}(s)) - \sigma(\ell(s))\,\md B_i(s)\right|.
\end{align*}
Recall $\ex |X_i^{(n)}(0) - X_i(0)| \to 0.$ Taking expectations, applying Cauchy-Schwarz, and using Doob's maximal inequality,
\begin{align}
\begin{split}\label{eq:POC_exp_bounds}
\ex\|X_i^{(n_m)} - \wt{X}_i\|_{[0, T]} &\leq  
\ex|X_i^{(n_m)}(0) - X_i(0)| + 2\sqrt{\ex \int_0^T|\sigma(\L^{(n_m)}(s)) - \sigma(\ell(s))|^2\,\md s}\\
&\leq \ex|X_i^{(n_m)}(0) - X_i(0)| + 2\sqrt{T\ex\|\sigma\circ \L^{(n_m)} - \sigma\circ \ell\|_{[0, T]}^2}\\
&\lra 0.
\end{split}
\end{align}
(This also shows that $(X_1^{(n)}, \dots, X_k^{(n)})$ converges as a distribution on $C([0, T], \R^k)$
to the $k-$tuple of independent processes $(\wt{X}_1, \dots, \wt{X}_k)$.)
\end{proof}

Consequently, by the triangle inequality and exchangeability of the system,
\begin{align*}
\ex \|\L^{(n_m)} - \frac{1}{n_m}\sum_{i = 1}^{n_m}\wt{L}_i\|_{[0, T]} \leq \ex \|X_i^{(n_m)} - \wt{X}_i\|_{[0, T]} \lra 0.
\end{align*}
But by the SLLN
\[
\frac{1}{n}\sum_{i = 1}^n\wt{L}_i(t) \lra \ex \wt{L}_1(t),
\]
for each fixed $t,$ almost surely. So $\L^{(n_m)}(\cdot)$ converges to $\ex \wt{L}_1(\cdot)$ in $L_1$. Since $\L^{(n_m)}(\cdot)$ already converges in probability to $\ell(\cdot),$ we conclude
$l(t) = \ex \wt{L}_1(t)$ for all $t \in [0, T]$, almost surely.

\subsection{Hydrodynamic limit: Measure on path-space vs. measure-valued paths}
In this subsection we compare two different ways of viewning the empirical collection of particles $(X_1^{(n)},\dots, X_n^{(n)})$: That of a measure on path-space (scheme A), and that of measure-valued paths (scheme B).
We first describe scheme A.
For each $n,$ $\omega \mapsto X_i^{(n)}(\omega)$ maps the probability space
$\Omega$ to $C[0, T].$ Consequently, $\omega \mapsto \delta_{X_i^{(n)}(\omega)}$ is a map
from $\Omega$ to $\mc{P}(C[0, T])$, the space of probability measures on $C[0, T]$ with the metric of weak convergence. Here $\delta_g$ is the point mass at $g \in C[0, T].$ Therefore,
\[
\pi^{(n)}:= \frac{1}{n}\sum_{i = 1}^n\delta_{X_i^{(n)}}
\]
is a random element in $\mc{P}(C[0, T]).$ In other words, $\pi^{(n)}$ is a random measure on path space.

In scheme B we view $\pi^{(n)}$ as a measure-valued path. Note that for a fixed time $t \in [0, T]$ $\{X_i^{(n)}(t) : i = 1, \dots, n\}$ is a random collection of particles in $\R$. So
\[
\pi^{(n)}_t := \frac{1}{n}\sum_{i = 1}^n\delta_{X_i^{(n)}(t)}
\]
is a (random) element in $\mc{P}(\R),$ the space of probability measures on $\R$ with the metric of weak convergence. It follows easily from path-wise continuity of the $X_i^{(n)}$ that $\pi^{(n)}_t$ is a.s.\ continuous in $t$ under this metric. Therefore $\pi^{(n)}_t$ is random element in
$C([0, T], \mc{P}(\R))$, i.e. a continuous $\mc{P}(\R)$-valued process, and induces a measure on
continuous measure-valued paths.

\begin{theorem}(Hydrodynamic Limit A)\label{HL:A}
Let $Q$ denote the law on $C[0, T]$ induced by the process $\wt{X}_1 = \lim_{n \to \infty} X_1^{(n)}$. Then
$\pi^{(n)}$ converges weakly to $\d_Q$. That is,
\[
\pi^{(n)} \implies \d_Q
\]
in $\mc{P}(C[0, T]).$
\end{theorem}

\begin{theorem}(Hydrodynamic Limit B)\label{HL:B}
Let $q(t, x)$ be the density of $\wt{X}_1$. The empirical process $\pi^{(n)}$ converges 
in distribution to $\{q(t, x)\md x : t \in [0, T]\}$ in $C([0, T], \mc{P}(\R))$, where $q$ is the transition density of the process $\wt{X}_1.$
\end{theorem}

\noindent
Theorem \ref{HL:A} is equivalent to the propagation of chaos (Theorem \ref{progagation_chaos}) for exchangeable systems of processes. See M\'eleard \cite{Meleard}, where the difference
between settings A and B is discussed, and an (easily adapted) argument shows that Theorem \ref{HL:A}
implies Theorem \ref{HL:B}.

\section{Existence and uniqueness of the PDE}\label{subsec:E_U} The limit process of $\wt{X}_1$ admits a density because it is a continuously differentiable time change of 
a reflected Brownian motion as given in equation \eqref{eq:time_change_limit}. 
Let $q(t, x)$ be the density of $\wt{X}_1$ with initial condition $X_1(0).$ From the generalized Ito rule with convex functions \cite[Chap 3.6D]{KaratzasShreve}, we see from \eqref{eq:time_change_limit} that
\[
\int_0^t \frac{\mathds 1{(0 < Y(s) < \e)}}{\e} \sigma^2(\ell(s))^2 \md s = 2\e^{-1}\int_0^\e \Lambda(t, a)\md a,
\] where $\Lambda(t, a)$ is the random field local time of $\wt{X}_1$, $\Lambda(t, 0) = \wt{L}_1(t)$. By taking expectations of both sides, letting $\e \to 0$, one can show using a dominated convergence argument that
\begin{align}
\begin{split}\label{eq:expected_LT}
2\ex \wt{L}_1(t) &= \int_0^tq(s, 0)\, \md \langle \wt{X}_1 \rangle(s) = \int_0^tq(s, 0)\sigma^2(\ell(s))\, \md s,
\end{split}
\end{align}
so
\begin{align}\label{eq:integral_rep_ell}
\ell(t) = \ex \wt{L}_1(t) = \frac{1}{2}\int_0^tq(0, s)\sigma^2(\ell(s))\,\md s.
\end{align} 
Then the pair $(q, \ell)$ solves the PDE given in subsection \ref{intro:PDE},
where $X_1(0) \dist f.$ The condition
\[
\lim_{t \downarrow 0}q(t, \md x) = f(\md x)
\] can be interpreted as $\wt{X}_1(t)$ approaching $X_1(0)$ almost surely as $t \to 0$, so in particular the distribution of $\wt{X}_1$ at time $t$ converges to
that of $X_1(0)$ as $t \to 0.$ See Remark \ref{rem:change_time_ell_int_rep} below.
\begin{remark}\label{rem:change_time_ell_int_rep}
One can also demonstrate \eqref{eq:integral_rep_ell} from the time change representation of $\wt{X}_1$ given in \eqref{eq:time_change_limit}. Reflected Brownian motion $Z_1$ can be expressed as $Z_1 = L_{Z_1} + B$ where $L_{Z_1}$ is the
local time of $Z_1$ at zero and $B$ is a Brownian motion. Then \eqref{eq:time_change_limit} becomes
\[
\wt{X}_1(t) = X_1(0) + L_{Z_1} \circ \int_0^t\sigma^2(\ell(s))\, \md s + B \circ \int_0^t\sigma^2(\ell(s))\, \md s,
\]
and furthermore $\wt{L}_1(t) = L_{Z_1} \circ \int_0^t\sigma^2(\ell(s))\, \ds$. We use the classical fact that 
\[
\ex L_{Z_1}(u) = \frac{1}{2}\int_0^up(0, s)\, \md s,
\] where $p(x, t)$ is 
the transition density of $Z_1$. See \cite{burdzy2004} for a more general result. Define

\[
u(t) := \int_0^t\sigma^2(\ell(s))\, \md s.
\]
Then
\begin{align*}
\ex \wt{L}_1(t) &= \frac{1}{2}\ex (L_{Z_1}\circ u(t))= \frac{1}{2}\int_0^{t}p(0, u(s))\, \md u(s)\\
&= \frac{1}{2}\int_0^{t}p(0, u(s))\sigma^2(\ell(s))\, \md s = \frac{1}{2}\int_0^{t}q(0, s)\sigma^2(\ell(s))\, \md s,
\end{align*}
here $q(x, s)$ is the transition density of $\wt{X}_1.$\\

If $X_1(0) = 0$, we know from 
classical theory \cite{KaratzasShreve} that $\ex L_{Z_1}(u) = 2\sqrt{2u/\pi}.$ 
Therefore, as in Remark \ref{rem:change_time_ell_int_rep},
\begin{align*}
	\ell^2(t) &= \left(\frac{1}{2}\ex L_{Z_1} \circ u(t) \right)^2\\
	&= \frac{2u(t)}{\pi}\\
	&= \frac{2}{\pi}\int_0^t\sigma^2(\ell(s))\, \md s.
\end{align*}
The integral representation of $\ell$ implies $\ell'$ exists, so taking derivatives of both sides above,
\begin{align}\label{eq:PDE_ell}
	\ell(t)\ell'(t) = \frac{1}{\pi}\sigma^2(\ell(t)), \, \ell(0) = 0.
\end{align}
This fact, which was derived using the explicit formula of the expectation for local time of reflected Brownian motion, shows that the solution for $\ell$ in equations \eqref{intro:eq:PDE1}-\eqref{intro:eq:PDE4} can
be computed from $q$ by solving \eqref{eq:PDE_ell}. However, the solution of $q$ and $\ell$ are dependent since $q$ depends on $\ell$ once
it is found.
\end{remark}

\begin{example}[$\sigma(x) = 1/x^{p/2}$ for $p > 0$] \label{example:pde} Solving \eqref{eq:PDE_ell} when $\sigma(x) = 1/x^{p/2}$,
\[
\ell'(t) = \frac{1}{\pi(\ell(t))^{p + 1}}.
\] So
$\ell(t) = \alpha_pt^{\frac{1}{p + 2}},$ where $\alpha_p = (\frac{p + 2}{\pi})^{(1/(p+2))}.$
The time change of the solution to the classical Neumann problem, or equivalently, the time change
of the reflected Brownian motion, is determined by
\[
u(s) = \int_0^s\sigma^2(\ell(x))\, \md x = \frac{1}{\alpha^p}\int_0^s\frac{\md x}{x^{p/(p + 2)}}
=\beta_ps^{\frac{2p + 2}{p + 2}}
\]
where $\beta_p = \frac{p + 2}{2\alpha_p^p(p + 1)}$.
\end{example}
\noindent
The propagation of chaos result, Theorem \ref{progagation_chaos}, gives existence of a process $\wt{X}_1$ whose transition density exists and satisfies \eqref{intro:eq:PDE1}-\eqref{intro:eq:PDE4}. To show uniqueness, we use a stochastic representation (coupling)
by considering two solutions and expressing each as an appropriate time change of the same
reflected Brownian motion.
\begin{theorem}[Uniqueness]\label{th:uniqueness}
There exists a unique pair $(q, \ell)$ that solves \eqref{intro:eq:PDE1}-\eqref{intro:eq:PDE4} in the classical sense.
\end{theorem}
\begin{proof}
Existence follows from letting $q$ be the transition density of $\wt{X}_1(\cdot)$ and $\ell(t) = \ex \wt{L}_1(t)$, as mentioned above. To demonstrate uniqueness, assume $(q_1, \ell_1), (q_2, \ell_2)$ are two solutions of \eqref{intro:eq:PDE1}-\eqref{intro:eq:PDE4}. Let $(\Omega, \prob, (\F_t)_{t \geq 0})$ be a probability space supporting a Brownian motion $B$. Consider two processes $Z_1, Z_2,$ driven by $B$ (so $Z_i$ are coupled on $(\Omega, \prob)$) solving
\begin{align}\label{eq:Z_i}
Z_i(t) = X_1(0) + L_{Z_i}(t) + \int_0^t\sigma(\ell_i(s))\, \md B(s)
\end{align}
where $L_{Z_i}$ is the local time of $Z_i$ at zero and $X_1(0) \dist f(\md x)$ is independent of $B$. The density of $Z_i$, $q_i(t, x)$, solves \eqref{intro:eq:PDE1}-\eqref{intro:eq:PDE4} with $\ell_i$ in place of $\ell$. The same argument as in Remark \ref{rem:change_time_ell_int_rep} shows $q_i(\cdot, \cdot)$
is the unique solution because it is the specific time change given by $\ell_i.$ As mentioned in Remark \ref{remark:system_existence}, $L_{Z_i}$ is the signed running minimum of 
\[
X_1(0) + \int_0^t\sigma(\ell_i(s))\, \md B(s), \, \ell_i(t) = \ex L_{Z_i}(t).
\]
Because $Z_1, Z_2$ are driven by the same Brownian motion we obtain similar bounds as \eqref{eq:POC_exp_bounds}. By Jensen's inequality and Doob's maximal inequality
\begin{align*}
\|\ell_1 - \ell_2\|_{[0, T]}^2 &= \|\ex(L_{Z_1} - L_{Z_2})\|_{[0, T]}^2\\
&\leq \ex\| L_{Z_1} - L_{Z_2}\|_{[0, T]}^2\\
&\leq \ex \| \int_0^\cdot \sigma(\ell_1(s)) - \sigma(\ell_2(s))\, \md B(s)\|_{[0, T]}^2\\
&\leq 4\ex \int_0^T |\sigma(\ell_1(s)) - \sigma(\ell_2(s))|^2\, \md s.
\end{align*}
By Lipschitz continuity of $\sigma$
\begin{align*}
\|\ell_1 - \ell_2\|_{[0, T]}^2 &\leq K\int_0^T|\ell_1(s) - \ell_2(s)|^2\, \md s \leq K\int_0^T\|\ell_1 - \ell_2\|_{[0, s]}^2\, \md s.
\end{align*}
Gr\"onwall's inequality implies $\|\ell_1 - \ell_2\|_{[0, t]}^2$ is bounded by $|\ell_1(0) - \ell_2(0)|\exp Kt$, which is zero. Therefore $\ell_1 = \ell_2$. From the definition \eqref{eq:Z_i}, we see
\[
\|Z_1 - Z_2\|_{[0, T]} \leq 2\left\| \int_0^t \sigma(\ell_1(s)) - \sigma(\ell_2(s))\, \md B(s)\right\|_{[0, T]} = 0.
 \]
 So $Z_1(t) = Z_2(t)$ for all $t \in [0, T]$, almost surely. Consequently
$q_1 = q_2$ because $q_1$ and $q_2$ are the transition density of the same process.
\end{proof}

\begin{remark}\label{remark:strong_existence}
At the end of the proof of Theorem \ref{th:uniqueness} we showed that $\prob\big(Z_1(t) = Z_2(t) \, \text{ for all } t \in [0, T]\big) = 1$. In other words, we demonstrated strong uniqueness of
processes solving \eqref{th:POC:X_tilde} and \eqref{th:POC:ell}. This was used to show 
uniqueness of the pair $(q, \ell)$ solving \eqref{intro:eq:PDE1}-\eqref{intro:eq:PDE4}. In fact, strong 
existence and uniqueness of the aforementioned process is equivalent to existence and uniqueness 
of the PDE.
\end{remark}

\end{document}